\theoremstyle{thmstyleone}%
\newtheorem{theorem}{Theorem}[section]
\theoremstyle{thmstyletwo}%
\newtheorem{remark}{Remark}[section]%
\theoremstyle{thmstylethree}%
\newtheorem{lemma}{Lemma}[section]
\newtheorem{corollary}{Corollary}[section]
\begin{document}

\title[]{Long-term behaviour of symmetric partitioned linear multistep methods I.  Global error and conservation of invariants}


\author*[1]{\fnm{Bego\~na} \sur{Cano}}\email{bcano@uva.es}

\author[1]{\fnm{\'Angel} \sur{Dur\'an}}\email{angeldm@uva.es}

\author[1]{\fnm{Melquiades} \sur{Rodr\'\i guez}}\email{melquiades.rodriguez@uva.es}

\affil[1]{\orgdiv{Applied Mathematics Department}, \orgname{University of Valladolid}, \orgaddress{\street{P/ Belen, 7}, \city{Valladolid}, \postcode{47011},  \country{Spain}}}


\abstract{In this paper an asymptotic expansion of the global error on the stepsize for partitioned linear multistep methods is proved. This provides a tool to analyse the behaviour of these integrators with respect to error growth with time and conservation of invariants. In particular, symmetric partitioned linear multistep methods with no common roots in their first characteristic polynomials, except unity, appear as efficient methods to approximate non-separable Hamiltonian systems since they can be explicit and show good long term behaviour at the same time. As a case study, a thorough analysis is given for small oscillations of the double pendulum problem, which is illustrated by numerical experiments.}

\keywords{Symmetric PLMM, global error, preservation of invariants, error growth}



\maketitle

\section{Introduction}\label{sec1}
The advantages that geometric integrators offer when integrating ordinary differential problems with a certain structure are well-known \cite{HLW_geom,LR,SSC}. In particular, symplecticity and symmetry are always desirable properties of the integrators in order to emulate the qualitative behaviour of Hamiltonian and reversible systems and even to conserve or better approximate invariants of the problem.

Nevertheless, when integrating general first-order systems, symplectic Runge-Kutta methods do necessarily need to be implicit. Just for special types of systems, explicit symplectic Runge-Kutta-type methods can be constructed. That is the case of second-order systems where the first-derivative does not turn up, for which there exist explicit symplectic Runge-Kutta-Nystr\"om methods and separable  problems for which partitioned Runge-Kutta methods can be implemented in an explicit way. Even for some very special classes of nonseparable Hamiltonian problems, there exist some explicit symplectic integrators especifically constructed for them, like those in \cite{B,C,MQ,T}. However, for general nonseparable Hamiltonian problems, we just know of the quite recent semiexplicit symplectic integrators in \cite{JO, O}, which preserve quadratic invariants but are computationally not clearly more efficient than the classical implicit Gauss methods.

On the other hand, in spite of the fact that linear multistep methods (LMMs) cannot be symplectic \cite{ESS}, the ones which are symmetric satisfy that their underlying one-step method is conjugate-symplectic \cite{Hairer}. This is good since LMMs can be explicit and symmetric at the same time and a lot of effort has been made in the literature to develop and analyse these methods \cite{CD1, CD2, CSS, HL, QT}. However, the advantageous theoretical results which have been obtained for long-term integration just concern again either the integration of second-order differential systems with no first derivatives with symmetric LMMs especially designed for them (and with no double roots in their first characteristic polynomial \cite{CSS,HL}) either the integration of separable Hamiltonian systems with symmetric partitioned LMMs (PLMMs) where the first characteristic polynomials are not allowed to have common roots except for that which gives consistency \cite{CH}.

In the case of LMMs for special 2nd-order systems, the analysis was firstly done in terms of asymptotic expansions of the global error and the conclusion was that, in some problems, like planetary motion ones, the propagation of error just in some specific direction led to a very slow error growth with time compared with other non-symmetric LMMs. On the contrary, general symmetric LMMs applied to the underlying first-order systems were shown to lead to exponential error growth with time, much worse than non-symmetric LMMs. Later on, an analysis on the same type of methods for 2nd-order systems was performed using modified differential equations and very advantageos results on the approximation of invariants were observed.

In the case of PLMMs, the analysis was done in terms of modified differential equations and the conclusion was that, when the differential system is reversible, the modified differential equation which the smooth part of the numerical solution satisfies, is also reversible when the method is symmetric. Furthermore, when the Hamiltonian is separable with quadratic kinetic energy, the modified equation is Hamiltonian. On the other hand, in separable Hamiltonian problems, the components  associated to the parasitic roots are shown to remain bounded  and small. However, no justification is given in \cite{CH} on the behaviour of symmetric PLMMs  with respect to invariants when the Hamiltonian is not quadratic and even less when the Hamiltonian is not separable.

The aim of this paper is thus to explain the good long term behaviour of symmetric PLMMs when integrating ordinary differential systems which do not necessarily correspond to separable Hamiltonian problems. In case this problem has invariants, an error analysis will be pursued. For that, we will focus on the asymptotic expansion of the global error, which is thoroughly studied for general PLMMs. We will illustrate the conclusions which can be drawn with our analysis in a particular example, that of a double pendulum, which corresponds to a non-separable Hamiltonian problem. We will see that in the case of small oscillations, our analysis allows to justify the good behaviour of symmetric PLMMs (with no common roots in their first characteristic polynomial except unity) against symmetric LMMs or non-symmetric LMMs.

We advance that this advantageous behaviour can be seen and justified in other problems in the context of time integration of partial differential problems \cite{CD3}.

The paper is structured as follows: In Section 2, PLMMs are introduced and an asymptotic expansion for the global error on the stepsize is described in detail. For the sake of clarity, the proof of that is given in Appendix \ref{secA1}. In Section 3, the growth of error with time of the coefficients of that asymptotic expansion is analysed. For that, symmetry and the accuracy of the starting approximation are taken into account. These results are used in Section 4 in order to understand the behaviour in the numerical integration of those invariant quantities that the differential system may admit. Hamitonian systems and symmetric methods are again given particular consideration. Finally, a thorough analysis is performed for the case of small oscillations of a double pendulum, justifying why certain symmetric PLMMs perform so well in comparison with other non-symmetric PLMMs or symmetric non-partitioned LMMs. We end with some concluding remarks.

\section{Asymptotic expansion of the global error}\label{sec2}

We firstly notice that any initial value problem (IVP) of dimension $n \ge 2$ can always be written in the form
\begin{eqnarray}
\dot{p}(t)&=& f(p(t),q(t)), \nonumber \\
\dot{q}(t)&=& g(p(t),q(t)), \quad t_{0}\leq t\leq T,\label{system} \\
p(t_0)&=&p_0, \nonumber \\
q(t_0)&=&q_0. \nonumber
\end{eqnarray}
In (\ref{system}), we will assume that $T>t_0$, $p=p(t)\in \mathbb{R}^d, q=q(t)\in \mathbb{R}^{n-d},  1 \le d \le n-1$,  where $f=f(p,q)$ and $g=g(p,q)$ are smooth enough so that existence and uniqueness of solutions of (\ref{system}) are guaranteed as well as $C^{\infty}$-regularity of the solution $(p(t),q(t))$. (Less regularity may be sufficient for the foregoing theorems but this is assumed for the sake of simplicity.)

For the numerical approximation of (\ref{system}), we consider PLMMs, which are determined by a starting procedure and the following difference equations
\begin{eqnarray}
\rho_p(E) p_n&=& h \sigma_p (E) f(p_n,q_n), \nonumber \\
\rho_q(E) q_n&=& h \sigma_q (E) g(p_n,q_n),\quad h>0, \label{eq12}
\end{eqnarray}
with irreducible real generating polynomials $(\rho_p, \sigma_p)$ and $(\rho_q, \sigma_q)$, respectively.

Concerning (\ref{eq12}), we will make the following nonrestrictive hypotheses:
\begin{itemize}
\item[(i)] If $\mbox{deg}(\rho_p)=k_p$, $\mbox{deg}(\rho_q)=k_q$, then (\ref{eq12}) uniquely determines $p_{n+k_p}$ and $q_{n+k_q}$ from given $p_n,\dots,p_{n+k_p-1}$ and $q_n,\dots,q_{n+k_q-1}$.
\item[(ii)] Both methods are zero-stable and have the same order $r$, and the corresponding local truncation error can be written as
\begin{eqnarray}
\rho_p(E) y(t_n)-h \sigma_p(E) \dot{y}(t_n)= \sigma_p(E) \big( \sum_{j=r}^{2r-1} c_{j,p} h^{j+1} y^{(j+1)}(t_n)\big) +O(h^{2r+1}), \nonumber \\
\rho_q(E) y(t_n)-h \sigma_q(E) \dot{y}(t_n)= \sigma_q(E) \big( \sum_{j=r}^{2r-1} c_{j,q} h^{j+1} y^{(j+1)}(t_n)\big)+O(h^{2r+1}), \label{LTE}
\end{eqnarray}
where $t_{n}=nh, h>0, n=0,1,\ldots$
\item[(iii)] The  starting values $p_\nu$ ($\nu=0,1,\dots,k_p-1$), $q_\nu$ ($\nu=0,1,\dots,k_q-1$) are assumed to satisfy
\begin{eqnarray}
p(t_\nu)-p_\nu&=&O(h^r), \quad \nu=0,1,\dots,k_p-1,\nonumber\\
q(t_\nu)-q_\nu&=&O(h^r),\quad \nu=0,1,\dots,k_q-1.\label{bcad0a}
\end{eqnarray}
\item[(iv)] Let us denote by $\{x_i\}_{i=1}^m $ the common roots of modulus one of $\rho_p$ and $\rho_q$ (with $x_1=1$). We will also denote by $x_{i,p} (i=m+1,\dots,k_p')$ the unitary roots of $\rho_p$ which are not roots of $\rho_q$ and by $x_{i,q} (i=m+1,\dots, k_q')$ the unitary roots of $\rho_q$ which are not roots of $\rho_p$. Notice that all these roots are single because of the assumed zero-stability of both methods, see  (ii). According to \cite{R}, that assures zero-stability of the whole partitioned linear multistep method.
\item[(v)] We use the notation
\begin{eqnarray}
\rho_{\alpha,i}(x)&=&\rho_\alpha(x_i x), \quad \rho_{\alpha,i,\beta}(x)=\rho_\alpha(x_{i,\beta} x), \quad \alpha,\beta\in \{p,q\},\label{bcad0b}\\
\sigma_{\alpha,i}(x)&=&\sigma_\alpha(x_i x), \quad  \sigma_{\alpha,i,\beta}(x)=\sigma_\alpha(x_{i,\beta} x),\label{bcad0c}
\end{eqnarray}
and let
\begin{eqnarray}
\lambda_{p,i}= \frac{\sigma_p(x_i)}{x_i \rho_p'(x_i)}, \quad \lambda_{q,i}= \frac{\sigma_q(x_i)}{x_i \rho_q'(x_i)}. \label{gp}
\end{eqnarray}
As the methods $(\rho_{\alpha,i}, \sigma_{\alpha,i}/\lambda_{\alpha,i}), \alpha=p,q,$
are consistent, we assume that the associated local truncation errors can be written as, cf. (\ref{LTE}),
\begin{eqnarray}
\rho_{p,i}(E) y(t_n)-\frac{h}{\lambda_{p,i}} \sigma_{p,i}(E) \dot{y}(t_n)&=& \frac{1}{\lambda_{p,i}}\sigma_{p,i}(E) \big( \sum_{j=1}^{r-1} c_{j,p}^{(i)} h^{j+1} y^{(j+1)}(t_n)\big)
\nonumber\\ &&
 +O(h^{r+1}), \nonumber \\
\rho_{q,i}(E) y(t_n)-\frac{h}{\lambda_{q,i}} \sigma_{q,i}(E) \dot{y}(t_n)&=& \frac{1}{\lambda_{q,i}}\sigma_{q,i}(E) \big( \sum_{j=1}^{r-1} c_{j,q}^{(i)} h^{j+1} y^{(j+1)}(t_n)\big)
\nonumber\\ &&
+O(h^{r+1}), \label{LTEb}
\end{eqnarray}
for some coefficients $c_{j,\alpha}^{(j)}$, $\alpha=p,q$.
\end{itemize}
The first aim is analyzing the global error which turns up when using the numerical method (\ref{eq12}) to approximate the exact solution $(p(t),q(t))$ of (\ref{system}). We have the following result on an asymptotic expansion on the stepsize $h$, the proof of which, for the sake of clarity, is in Appendix \ref{secA1}.

\begin{theorem} \label{th1}
For fixed $t_{n}=nh, n=1,\ldots$, and under conditions (i)-(v), there are smooth functions  $e_{j,i,\alpha}, e_{j,i,\alpha,\beta}$, with $\alpha,\beta \in \{p,q \}$, such that, as $h\rightarrow 0$,
\begin{eqnarray}
p_n-p(t_n)&=&\sum_{j=r}^{2r-1} h^j \big[\sum_{i=1}^m x_i^n e_{j,i,p}(t_n) +\sum_{i=m+1}^{k_p'} x_{i,p}^n e_{j,i,pp}(t_n)+ \sum_{i=m+1}^{k_q'} x_{i,q}^n e_{j,i,pq}(t_n)\big] \nonumber \\
&&+O(h^{2r}), \nonumber \\
q_n-q(t_n)&=&\sum_{j=r}^{2r-1} h^j \big[\sum_{i=1}^m x_i^n e_{j,i,q}(t_n) +\sum_{i=m+1}^{k_p'} x_{i,p}^n e_{j,i,qp}(t_n)+ \sum_{i=m+1}^{k_q'} x_{i,q}^n e_{j,i,qq}(t_n)\big] \nonumber \\
&&+O(h^{2r}). \label{ae}
\end{eqnarray}
Here, the functions $e_{j,1,\alpha}$, with $\alpha \in \{p,q\}$,  $j=r,\ldots,2r-1$, satisfy
\begin{eqnarray}
\hspace{-0.35cm}\dot{\left( \begin{array}{c} e_{j,1,p}(t) \\ e_{j,1,q}(t) \end{array} \right)}&=& \left( \begin{array}{cc} f_p(p(t),q(t)) & f_q(p(t),q(t)) \\ g_p(p(t),q(t)) & g_q(p(t),q(t)) \end{array}
\right) \left(\begin{array}{c} e_{j,1,p}(t) \\ e_{j,1,q}(t) \end{array} \right)
- \left( \begin{array}{c} c_{j,p} p^{(j+1)}(t) \\ c_{j,q} q^{(j+1)}(t) \end{array} \right),
\label{ej1}
\end{eqnarray}
and the functions $e_{j,i,\alpha}$, with $\alpha \in \{p,q\}$, $i=2,\ldots,m$, $j=r,\ldots,2r-1$ are solutions of
\begin{eqnarray}
\hspace{-0.45cm}\dot{\left( \begin{array}{c} e_{j,i,p}(t) \\ e_{j,i,q}(t) \end{array} \right)}&=& \left( \begin{array}{cc} \lambda_{p,i} f_p(p(t),q(t)) & \lambda_{p,i} f_q(p(t),q(t)) \\ \lambda_{q,i} g_p(p(t),q(t)) & \lambda_{q,i} g_q(p(t),q(t)) \end{array}
\right) \left(\begin{array}{c} e_{j,i,p}(t) \\ e_{j,i,q}(t) \end{array} \right)
+ \left( \begin{array}{c} b_{j,i,p}(t) \\ b_{j,i,q}(t) \end{array} \right),
\label{eji}
\end{eqnarray}
where
\begin{eqnarray*}
b_{j,i,p}(t)=-\sum_{l=1}^{j-r} c_{l,p}^{(i)} e_{j-l,i,p}^{(l+1)}(t), \quad b_{j,i,q}(t)=-\sum_{l=1}^{j-r} c_{l,q}^{(i)} e_{j-l,i,q}^{(l+1)}(t).
\end{eqnarray*}
On the other hand, the functions $e_{j,i,\alpha\beta}$, with $\alpha,\beta \in \{p,q\}$, $i=m+1,\ldots,k_{\beta}'$, $j=r,\ldots,2r-1$, are determined in a recursive way
from the following differential systems. More precisely, for $j=r,r+1,r+2$, there holds
\begin{eqnarray}
e_{r,i,pq}(t)&=&e_{r,i,qp}(t)=0,
\label{erpqqp} \\
\dot{e}_{r,i,pp}(t)&=&\lambda_{p,i,p}f_p(p(t),q(t))e_{r,i,pp}(t), \qquad \lambda_{p,i,p}=\frac{\sigma_p(x_{i,p})}{x_{i,p}\rho_p'(x_{i,p})},
\label{eripp} \\
\dot{e}_{r,i,qq}(t)&=&\lambda_{q,i,q}g_q(p(t),q(t))e_{r,i,qq}(t), \qquad \lambda_{q,i,q}=\frac{\sigma_q(x_{i,q})}{x_{i,q}\rho_q'(x_{i,q})},
\label{eriqq} \\
e_{r+1,i,pq}(t)&=&\frac{\sigma_p(x_{i,q})}{\rho_p(x_{i,q})}f_q(p(t),q(t))e_{r,i,qq}(t), \label{er1pq} \\
e_{r+1,i,qp}(t)&=&\frac{\sigma_q(x_{i,p})}{\rho_q(x_{i,p})}g_p(p(t),q(t))e_{r,i,pp}(t), \label{er1qp} \\
\dot{e}_{r+1,i,pp}(t)&=&\lambda_{p,i,p} f_p(p(t),q(t)) e_{r+1,i,pp}(t)+b_{r+1,i,pp}(t), \label{er1pp} \\
\dot{e}_{r+1,i,qq}(t)&=&\lambda_{q,i,q} g_q(p(t),q(t)) e_{r+1,i,qq}(t)+b_{r+1,i,qq}(t), \label{er1qq}
\end{eqnarray}
where, dropping the argument $t$ for ease of notation,
\begin{eqnarray}
b_{r+1,i,pp}&=&\lambda_{p,i,p}  \frac{\sigma_q(x_{i,p})}{\rho_q(x_{i,p})}f_q(p,q) g_p(p,q) e_{r,i,pp}\nonumber \\
&&-\frac{1}{2}[x_{i,p} \frac{\rho_p''(x_{i,p})}{\rho_p'(x_{i,p})}+1]\lambda_{p,i,p} \frac{d}{dt}[f_p(p,q) e_{r,i,pp}]
+\frac{\sigma_p'(x_{i,p})}{\rho_p'(x_{i,p})}\frac{d}{dt}[f_p(p,q) e_{r,i,pp}],
\nonumber \\
b_{r+1,i,qq}&=&\lambda_{q,i,q}  \frac{\sigma_p(x_{i,q})}{\rho_p(x_{i,q})} g_p(p,q) f_q(p,q) e_{r,i,qq}
\nonumber \\
&&-\frac{1}{2}[x_{i,q}\frac{\rho_q''(x_{i,q})}{\rho_q'(x_{i,q})}+1]\lambda_{q,i,q} \frac{d}{dt}[g_q(p,q) e_{r,i,qq}]+\frac{\sigma_q'(x_{i,q})}{\rho_q'(x_{i,q})}\frac{d}{dt}[g_q(p,q) e_{r,i,qq}],
\nonumber
\end{eqnarray}
and
\begin{eqnarray}
e_{r+2,i,pq}&=&\frac{x_{i,q}}{\rho_p(x_{i,q})}[\sigma_p'(x_{i,q})-\frac{\rho_p'(x_{i,q})\sigma_p(x_{i,q})}{\rho_p(x_{i,q})}]\frac{d}{dt}[f_q(p,q) e_{r,i,qq})]
\nonumber \\
&&+\frac{\sigma_p(x_{i,q})}{\rho_p(x_{i,q})}f_q(p,q)[\frac{\sigma_p(x_{i,q})}{\rho_p(x_{i,q})}f_p(p,q)e_{r,i,qq}+e_{r+1,i,qq}]. \label{er2pq} \\
e_{r+2,i,qp}&=&\frac{x_{i,p}}{\rho_q(x_{i,p})}[\sigma_q'(x_{i,p})-\frac{\rho_q'(x_{i,p})\sigma_q(x_{i,p})}{\rho_q(x_{i,p})}]\frac{d}{dt}[g_p(p,q) e_{r,i,pp}]
\nonumber \\
&&+\frac{\sigma_q(x_{i,p})}{\rho_q(x_{i,p})}g_p(p,q)[\frac{\sigma_q(x_{i,p})}{\rho_q(x_{i,p})}g_q(p,q)e_{r,i,pp}+e_{r+1,i,pp}], \label{er2qp}
\\
\dot{e}_{r+2,i,pp}&=&\lambda_{p,i,p} f_p(p,q)e_{r+2,i,pp}+ b_{r+2,i,pp},
\label{er2ipp} \\
\dot{e}_{r+2,i,qq}&=&\lambda_{q,i,q} g_q(p,q)e_{r+2,i,qq}+ b_{r+2,i,qq},
\label{er2iqq}
\end{eqnarray}
with
\begin{eqnarray}
\lefteqn{b_{r+2,i,pp}= \lambda_{p,i,p} f_q(p,q) e_{r+2,i,qp}} \nonumber \\
&&+\frac{1}{6}[\frac{3[x_{i,p}\sigma_p''(x_{i,p})+\sigma_p'(x_{i,p})-x_{i,p}\rho_p''(x_{i,p})]-x_{i,p}^2 \rho_p'''(x_{i,p})}{\rho_p'(x_{i,p})}-1]\frac{d^2}{d t^2}[f_p(p,q) e_{r,i,pp}] \nonumber \\
&&+\bigg[\frac{\sigma_p'(x_{i,p})}{\rho_p'(x_{i,p})}-\frac{\lambda_{p,i,p}}{2}[x_{i,p}\frac{\rho_p''(x_{i,p})}{\rho_p'(x_{i,p})}+1]\bigg]\frac{d}{dt}[f_p(p,q) e_{r+1,i,pp}] \nonumber \\
&&+\frac{\sigma_p'(x_{i,p})}{\rho_p'(x_{i,p})}\frac{d}{dt}[f_q(p,q)e_{r+1,i,qp}]-\frac{\lambda_{p,i,p}}{2}[x_{i,p}\frac{\rho_p''(x_{i,p})}{\rho_p'(x_{i,p})}+1]
\dot{b}_{r+1,i,pp},\nonumber \\
\lefteqn{b_{r+2,i,qq}= \lambda_{q,i,q} g_p(p,q) e_{r+2,i,pq}} \nonumber \\
&&+\frac{1}{6}[\frac{3[x_{i,q}\sigma_q''(x_{i,q})+\sigma_q'(x_{i,q})-x_{i,q}\rho_q''(x_{i,q})]-x_{i,q}^2 \rho_q'''(x_{i,q})}{\rho_q'(x_{i,q})}-1]\frac{d^2}{d t^2}[g_q(p,q) e_{r,i,qq}] \nonumber \\
&&+\bigg[\frac{\sigma_q'(x_{i,q})}{\rho_q'(x_{i,q})}-\frac{\lambda_{q,i,q}}{2}[x_{i,q}\frac{\rho_q''(x_{i,q})}{\rho_q'(x_{i,q})}+1]\bigg]\frac{d}{dt}[g_q(p,q) e_{r+1,i,qq}] \nonumber \\
&&+\frac{\sigma_q'(x_{i,q})}{\rho_q'(x_{i,q})}\frac{d}{dt}[g_p(p,q)e_{r+1,i,pq}]-\frac{\lambda_{q,i,q}}{2}[x_{i,q}\frac{\rho_q''(x_{i,q})}{\rho_q'(x_{i,q})}+1]
\dot{b}_{r+1,i,qq}.\nonumber
\end{eqnarray}
The initial conditions of all systems (\ref{ej1}), (\ref{eji}), (\ref{eripp}), (\ref{eriqq}), (\ref{er1pp}), (\ref{er1qq}), (\ref{er2ipp}) and (\ref{er2iqq}) are determined by the starting procedure.
Moreover,  when
\begin{eqnarray}
p_\nu-p(t_\nu)&=&O(h^{r+1}), \quad \nu=0,1,\dots,k_p-1, \nonumber \\
 q_\nu-q(t_\nu)&=&O(h^{r+1}), \quad \nu=0,1,\dots,k_q-1, \nonumber
\end{eqnarray}
it happens that $e_{r,i,pp}(t_0)=e_{r,i,qq}(t_0)=0$. Because of that, in such a case,
$$
0=e_{r,i,pp}(t)=e_{r,i,qq}(t)=e_{r+1,i,pq}(t)=e_{r+1,i,qp}(t)=b_{r+1,i,pp}(t)=b_{r+1,i,qq}(t).$$
However, more accurate starting procedures do not lead to the annihilation of the starting values of the coefficients associated to higher powers of $h$ and so no further simplifications can be obtained.
\end{theorem}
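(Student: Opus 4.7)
The plan is to adapt the classical Gragg--Stetter construction of an asymptotic error expansion to the partitioned setting, with careful bookkeeping of the common and non-common unit-modulus roots of $\rho_p$ and $\rho_q$. First I would substitute the ansatz \eqref{ae} into the difference equations \eqref{eq12} and match coefficients of $h^j$ and of $\zeta^n$ separately for each unit root $\zeta$ appearing in the expansion. The basic identity driving the whole calculation is that, for any polynomial $P$ and any complex $\zeta$,
\begin{equation*}
P(E)\bigl[\zeta^n\, e(t_n)\bigr] = \zeta^n\, P(\zeta e^{hD})\, e(t_n),
\end{equation*}
which, expanded in powers of $h$ via $e^{hD}=1+hD+h^2D^2/2+\cdots$, turns the difference operator on the left of \eqref{eq12} into a differential operator on the smooth envelopes $e_{j,i,\alpha}$. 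A parallel expansion is carried out on $\sigma_\alpha(E)$ acting on $f(p_n,q_n),g(p_n,q_n)$, after Taylor-expanding the nonlinearities around $(p(t),q(t))$; since the perturbations $\delta p_n,\delta q_n$ are $O(h^r)$, only terms linear in the ansatz contribute at orders $h^r,\ldots,h^{2r-1}$, which decouples the matching root by root.

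For the consistency root $x_1=1$ and for the remaining shared unit roots $x_i$, the identities $\rho_p(x_i)=\rho_q(x_i)=0$ force the leading $O(h)$ contribution of $\rho_\alpha(x_i e^{hD})$ to equal $x_i\rho_\alpha'(x_i)\,hD$. Collecting the coefficient of $x_i^n h^{j+1}$ on both sides and using \eqref{gp} together with the local truncation error expansions \eqref{LTE}, \eqref{LTEb} yields the linear systems \eqref{ej1}--\eqref{eji}: at $i=1$ the Jacobian of $(f,g)$ appears as in the standard variational equation, with forcing $-c_{j,p}p^{(j+1)},-c_{j,q}q^{(j+1)}$ from the principal LTE, while for $i\ge 2$ the Jacobian is rescaled by $\lambda_{p,i},\lambda_{q,i}$, which measure the ratio of $\sigma$- to $x\rho'$-values at the corresponding root, and the forcings $b_{j,i,p},b_{j,i,q}$ encode the LTE of the shifted schemes $(\rho_{\alpha,i},\sigma_{\alpha,i}/\lambda_{\alpha,i})$.

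The genuinely new ingredient of the partitioned case is the treatment of non-shared roots. Consider $\zeta=x_{i,p}$: in the $p$-equation $\rho_p(x_{i,p})=0$, so matching the coefficient of $x_{i,p}^n h^{j+1}$ yields a differential equation for $e_{j,i,pp}$, producing \eqref{eripp}, \eqref{er1pp} and \eqref{er2ipp}; but in the $q$-equation $\rho_q(x_{i,p})\ne 0$, so the leading contribution is purely algebraic and forces $e_{r,i,qp}=0$ at order $h^r$, while at orders $h^{r+1}$ and $h^{r+2}$ it determines $e_{r+1,i,qp}$ and $e_{r+2,i,qp}$ by the closed-form expressions \eqref{er1qp} and \eqref{er2qp}, obtained simply by dividing by $\rho_q(x_{i,p})$. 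The higher-derivative corrections appearing in $b_{r+1,i,pp}$ and $b_{r+2,i,pp}$ (those involving $\rho_p''$, $\rho_p'''$, $\sigma_p'$, $\sigma_p''$) come precisely from the second- and third-order Taylor terms of $\rho_p(x_{i,p}e^{hD})$ and $\sigma_p(x_{i,p}e^{hD})$ around the root. The symmetric reasoning with $p\leftrightarrow q$ produces the analogous formulas \eqref{er1pq}, \eqref{er2pq}, \eqref{eriqq}, \eqref{er1qq} and \eqref{er2iqq}.

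The initial conditions of every ODE are fixed by requiring the ansatz, evaluated at $n=0,1,\ldots,k_\alpha-1$, to reproduce the prescribed starting values up to the appropriate order in $h$; this is a Vandermonde-type linear system in the envelopes $e_{j,i,\alpha}(t_0)$, whose invertibility follows from the simplicity of the unit roots guaranteed by hypothesis (iv). The final ``moreover'' claim is then immediate: when $p_\nu-p(t_\nu)=O(h^{r+1})$, the right-hand side of that Vandermonde system is $O(h^{r+1})$, so $e_{r,i,pp}(t_0)$ and $e_{r,i,qq}(t_0)$ vanish; since \eqref{eripp}--\eqref{eriqq} are linear homogeneous ODEs, their solutions vanish identically, and the cascading relations \eqref{er1pq}, \eqref{er1qp} together with the definitions of $b_{r+1,i,pp},b_{r+1,i,qq}$ inherit those zeros. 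I expect the main obstacle to be the combinatorial bookkeeping: producing the precise right-hand sides stated for $b_{r+1,i,pp}$ and $b_{r+2,i,pp}$ requires carefully isolating, at orders $h^{r+2}$ and $h^{r+3}$, every contribution coming from the second- and third-order Taylor terms of $\rho_p,\sigma_p$ combined with first- and second-derivative corrections of the smooth envelopes.
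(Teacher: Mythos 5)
Your plan follows essentially the same route as the paper's proof in Appendix \ref{secA1}: insert the ansatz into \eqref{eq12}, use the identity $P(E)\bigl[\zeta^n e(t_n)\bigr]=\zeta^n P(\zeta e^{hD})e(t_n)$ together with the truncation-error expansions \eqref{LTE} and \eqref{LTEb}, linearize the vector field about the exact solution, and annihilate the coefficient of each $\zeta^n h^{j}$ separately; that is exactly how \eqref{ej1}--\eqref{er2iqq} and the Vandermonde systems for the initial data arise there. There is, however, one genuine gap: coefficient matching only shows that the ansatz is \emph{formally} consistent. The actual content of the $O(h^{2r})$ claim is that the residuals $\varepsilon_n=p_n-p(t_n)-\sum(\cdots)$ and $\eta_n=q_n-q(t_n)-\sum(\cdots)$, which after the matching satisfy the coupled linearized difference equations with an $O(h^{2r+1})$ defect (cf.\ \eqref{epseta}) and $O(h^{2r})$ starting values, remain $O(h^{2r})$ over $O(h^{-1})$ steps. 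This requires a discrete stability (Gronwall-type) lemma for the \emph{coupled partitioned} recursion, the analogue of Lemma 5.6 of \cite{H}, and it is precisely here that the zero-stability of the whole partitioned method --- guaranteed by hypothesis (iv) via \cite{R}, and not merely by the zero-stability of each component scheme --- enters. Your proposal never invokes this step, and without it the expansion is not established.

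A secondary point: you build the expansion only over the unit-modulus roots. The Vandermonde systems determining the initial values have $k_p$ (resp.\ $k_q$) equations, one per starting value, so they are square only if \emph{all} roots of $\rho_p$ (resp.\ $\rho_q$) are carried along in the construction; the paper does exactly this and discards the contributions of the roots of modulus strictly less than one only at the very end, observing that for fixed $t_n>t_0$ their $n$th powers are themselves $O(h^{2r})$. With only the unit roots retained, your interpolation conditions at $\nu=0,\dots,k_\alpha-1$ would in general be overdetermined.
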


\subsection{Symmetric methods}

When (\ref{eq12}) is symmetric (that is, when both $(\rho_p,\sigma_p)$ and $(\rho_q,\sigma_q)$ are symmetric) some remarks can be made.
\begin{enumerate}
\item[(1)]
Due to the zero-stability (see assumption (ii)), all the roots of $\rho_p$ and $\rho_q$ have unit modulus and are single (we remind that $\rho_{\alpha}(x)=-x^{k_{\alpha}}\rho_{\alpha}(1/x)$ for symmetric methods).
This implies that, in formula (\ref{ae}), $k_p'$ and $k_q'$ can be substituted by $k_p$ and $k_q$ and, in fact, revising the proof of Theorem \ref{th1}, the result is valid for $n\ge 0$.

\item[(2)] It is also well known that the growth parameters $\lambda_{p,i}$, $\lambda_{q,i}$, $\lambda_{p,i,p}$ and $\lambda_{q,i,q}$ in (\ref{gp}), (\ref{eripp}), (\ref{eriqq}) are real \cite{S}. However, $\sigma_p(x_{i,q})/\rho_p(x_{i,q})$ and $\sigma_q(x_{i,p})/\rho_q(x_{i,p})$ in (\ref{er1pq}) and (\ref{er1qp}) are purely imaginary. This is due to the fact that, for these methods, $\rho_{\alpha}(x)=-x^{k_{\alpha}} \rho_{\alpha}(1/x)$ and $\sigma_{\alpha}(x)=x^{k_{\alpha}} \sigma_{\alpha}(1/x)$, so, for example,
$$
\frac{\sigma_p(x_{i,q})}{\rho_p(x_{i,q})}=\frac{x_{i,q}^{k_p} \sigma_p(\bar{x}_{i,q})}{-x_{i,q}^{k_p}\rho_p(\bar{x}_{i,q})}=-\frac{\sigma_p(\bar{x}_{i,q})}{\rho_p(\bar{x}_{i,q})}.
$$
In fact, if $x_{i,q}=\bar{x}_{i,q}=-1$, this implies that $\sigma_p(x_{i,q})/\rho_p(x_{i,q})=0$.
Moreover, some other coefficients in (\ref{er1pp})-(\ref{er2iqq}) are complex, without necessarily being purely imaginary. What is true, in any case, is that, for complex conjugate roots, those coefficients are complex conjugates. Thus, in the end, the sum is real.
\end{enumerate}

\section{Error growth analysis}\label{sec3}
From the results obtained in Section \ref{sec2}, the time behaviour of the error is here discussed. We will distinguish the different terms of the error corresponding to the associated roots of $\rho_p$ and $\rho_q$.

We remind that perturbations on the initial conditions of the continuous system (\ref{system}) propagate in time in first approximation through the variational system
\begin{eqnarray}
\dot{\delta}(t)=\left( \begin{array}{cc} f_p(p(t),q(t)) & f_q(p(t),q(t)) \\ g_p(p(t),q(t)) & g_q(p(t),q(t)) \end{array} \right) \delta(t). \label{ecvar}
\end{eqnarray}
We will denote by $\mathcal{M}(t,s)$ the corresponding transition matrix, which satisfies
$$\delta(t)=\mathcal{M}(t,s) \delta(s), \mbox{ for }t \ge s \ge t_0.$$

\subsection{Coefficients associated to the root $x_1=1$}
It is clear that the coefficients $e_{j,1,p}(t)$ and $e_{j,1,q}(t)$ in (\ref{ae}), which satisfy (\ref{ej1}), can be written as
$$
\dot{\left( \begin{array}{c} e_{j,1,p}(t) \\ e_{j,1,q}(t) \end{array} \right)}= \mathcal{M}(t,t_0) \left( \begin{array}{c} e_{j,1,p} \\ e_{j,1,q} \end{array} \right) (t_0)+\int_{t_0}^t \mathcal{M}(t,s)
\left( \begin{array}{c} -c_{j,p} p^{(j+1)}(s) \\ -c_{j,q}q^{(j+1)}(s) \end{array} \right)ds,
$$
for the transition matrix $\mathcal{M}(t,s)$ associated to  (\ref{ecvar}). The behaviour of $\mathcal{M}(t,s)$ determines the behaviour of $e_{j,1,p}(t)$ and $e_{j,1,q}(t)$ with time. For example, if
\begin{eqnarray}
\|\mathcal{M}(t,s)\|\le C[1+ (t-s)^\gamma],
\label{cotaM}
\end{eqnarray}
for some matrix norm, some real value $\gamma\ge 0$ and some constant $C>0$ independent of $s$ and $t$, and it happens that the derivatives $(p^{(j+1)}(t),q^{(j+1)}(t))$ are bounded in $[t_0, t_0+T]$, it follows that  $e_{j,1,p}(t)$ and $e_{j,1,q}(t)$ behave as $O((t-t_0)^{\gamma+1})$ when $t$ grows.
 \subsubsection{Symmetric methods}
 It is well-known  that, with symmetric methods, $c_{j,p}=c_{j,q}=0$ for odd $j$ \cite{S}. Because of this, $e_{j,1,p}(t)$ and $e_{j,1,q}(t)$ will grow like $O((t-t_0)^\gamma)$ for odd $j$ and like $O((t-t_0)^{\gamma+1})$ for even $j$. As symmetric methods have even order, the dominant term will correspond to $O((t-t_0)^{\gamma+1} h^r)$.

 \subsection{Coefficients associated to the roots $x_{i}\,\, (i=2,\dots,m)$}
\label{coefcurd1}
 In the same way as it happens with non-partitioned LMMs \cite{CSS}, the homogeneous part of the differential systems in (\ref{eji}) are different from (\ref{ecvar}). Because of that, in many problems, those coefficients lead to exponential growth of error with time. (See \cite{CSS}, where symmetric methods for first-order differential systems are not recommended for this reason.) However, with PLMMs, symmetric methods can be constructed where $x_1=1$ is the only common root of both $\rho_p$ and $\rho_q$. Then, that type of behaviour can be avoided.

 \subsection{Coefficients associated to the non-common roots of unitary modulus}
 \label{coefncr}
 The following result states how the coefficients associated to the non-common roots of unitary modulus behave.




\begin{theorem}
Under conditions of Theorem \ref{th1}, when the transition matrices associated to (\ref{eripp}) and (\ref{eriqq}) are bounded, assuming enough regularity and that the components of the Jacobian of the vector field are bounded at the exact solution as well as their time derivatives, the terms of the error associated to the non-common roots of $\rho_p$ and $\rho_q$  behave as $O(h^r)$ where the constant in Landau notation is bounded for $t-t_0=O(h^{-1})$.

Moreover, if the starting values differ from the exact ones in  $O(h^{r+1})$, the terms of the error associated to the non-common roots of $\rho_p$ and $\rho_q$ will behave as $O(h^{r+1})$ where the constant in Landau notation is bounded for $t-t_0=O(h^{-1})$. Equivalently, they behave as $O(h^r)$ uniformly in time for $t-t_0=O(h^{-2})$.
\label{th3}
\end{theorem}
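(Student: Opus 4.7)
The plan is to read the non-common-root part of the error directly from the asymptotic expansion (\ref{ae}) given by Theorem \ref{th1} and to estimate each coefficient separately. Since the roots in question have unit modulus, $|x_{i,p}^n|=|x_{i,q}^n|=1$, so the total contribution of those terms to $p_n-p(t_n)$ and $q_n-q(t_n)$ is bounded by $\sum_{j,i}h^j|e_{j,i,\alpha\beta}(t_n)|+O(h^{2r})$. The whole task therefore reduces to controlling the time growth of the functions $e_{j,i,\alpha\beta}(t)$ under the hypotheses that the transition matrices of (\ref{eripp})--(\ref{eriqq}) are bounded and that the Jacobian components evaluated at the exact solution, together with their time derivatives, stay bounded.

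The core of the argument is an induction on $j$ driven by the recursive structure displayed in (\ref{eripp})--(\ref{er2iqq}) (and its analogous continuation for $j>r+2$). At the base level $j=r$, the functions $e_{r,i,pp}$ and $e_{r,i,qq}$ solve the homogeneous linear systems (\ref{eripp})--(\ref{eriqq}), so the assumed boundedness of their transition matrices gives $|e_{r,i,pp}(t)|,|e_{r,i,qq}(t)|\le C$. The algebraic identities (\ref{er1pq})--(\ref{er1qp}) then yield bounded $e_{r+1,i,pq}$ and $e_{r+1,i,qp}$, while the ODEs (\ref{er1pp})--(\ref{er1qq}) with their (now bounded) forcings $b_{r+1,i,\cdot\cdot}$ give, by variation of constants, $|e_{r+1,i,pp}(t)|,|e_{r+1,i,qq}(t)|\le C(1+(t-t_0))$. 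Each subsequent passage from $j$ to $j+1$ introduces at most one extra factor of $(t-t_0)$: the algebraic pieces of type (\ref{er2pq})--(\ref{er2qp}) merely inherit the polynomial growth of lower levels, and the ODE pieces of type (\ref{er2ipp})--(\ref{er2iqq}) add one power through the variation-of-constants integral. Hence by induction $|e_{r+k,i,\alpha\beta}(t)|\le C_k(1+(t-t_0))^k$, and substituting into (\ref{ae}) gives the non-common-root contribution bounded by $\sum_{k=0}^{r-1}C_k h^{r+k}(1+(t-t_0))^k + O(h^{2r})$, which is $O(h^r)$ uniformly for $t-t_0=O(h^{-1})$.

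For the improved starting-value case, the base step is eliminated by Theorem \ref{th1} itself: the hypothesis $p_\nu-p(t_\nu)=q_\nu-q(t_\nu)=O(h^{r+1})$ already yields $e_{r,i,pp}\equiv e_{r,i,qq}\equiv 0$, and by the cascading observed in Theorem \ref{th1} also $e_{r+1,i,pq}\equiv e_{r+1,i,qp}\equiv b_{r+1,i,pp}\equiv b_{r+1,i,qq}\equiv 0$. The functions $e_{r+1,i,pp}$ and $e_{r+1,i,qq}$ then solve purely homogeneous ODEs and are bounded. Re-running the induction with this shifted base produces $|e_{r+k,i,\alpha\beta}(t)|\le C_k(1+(t-t_0))^{k-1}$ for $k\ge 1$, so the non-common-root part is bounded by $\sum_{k=1}^{r-1}C_k h^{r+k}(1+(t-t_0))^{k-1}$, which is $O(h^{r+1})$ for $t-t_0=O(h^{-1})$. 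The second formulation comes from the dominant growth rate, which is linear in time with coefficient of order $h^{r+2}$: on windows of length $O(h^{-2})$ it saturates at $O(h^{r+2}\cdot h^{-2})=O(h^r)$, while the bounded $h^{r+1}$ contribution is absorbed.

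The main obstacle is carrying out the induction cleanly: the forcings $b_{j,i,\cdot\cdot}$ involve time derivatives $\frac{d}{dt}$ and $\frac{d^2}{dt^2}$ of lower-level coefficients, so the induction hypothesis must track not only pointwise bounds on $e_{j-1,i,\cdot\cdot}$ but also bounds on their time derivatives at the same polynomial rate; this in turn relies on the regularity and boundedness assumed for the Jacobian and its derivatives along the exact solution. Once the induction is set up with this enriched hypothesis, both assertions of the theorem reduce to collecting the resulting polynomial-in-time bounds and plugging them into (\ref{ae}).
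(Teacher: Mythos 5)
Your proposal follows essentially the same route as the paper's proof: a cascade/induction through the recursive structure (\ref{eripp})--(\ref{er2iqq}), starting from the boundedness of the transition matrices at the base level $j=r$, with each ODE level contributing at most one extra power of $(t-t_0)$ via variation of constants and the algebraic relations (\ref{er1pq}), (\ref{er1qp}), (\ref{er2pq}), (\ref{er2qp}) merely propagating the polynomial growth, and with the improved-starting-value case handled exactly as in the paper through the vanishing of $e_{r,i,pp}$, $e_{r,i,qq}$, $e_{r+1,i,pq}$, $e_{r+1,i,qp}$, $b_{r+1,i,pp}$, $b_{r+1,i,qq}$ guaranteed at the end of Theorem \ref{th1}. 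Your closing remark that the induction hypothesis must also track time derivatives of the lower-level coefficients (since the forcings involve terms such as $\dot{b}_{r+1,i,pp}$ and second time derivatives of products with the Jacobian entries) makes explicit a point the paper's proof leaves implicit.
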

\begin{proof}
The boundedness of the transition matrices associated to (\ref{eripp}) and (\ref{eriqq}) implies that $e_{r,i,pp}$ and $e_{r,i,qq}$ are bounded. In addition, from  (\ref{er1pq}) and (\ref{er1qp}), if $f_q(p(t),q(t))$ and $g_p(p(t),q(t))$ are bounded with time, the same happens with $e_{r+1,i,pq}$ and $e_{r+1,i,qp}$, as well as with $b_{r+1,i,pp}$ and $b_{r+1,i,qq}$ if $f_p(p(t),q(t))$, $g_q(p(t),q(t))$ and their first time derivatives are also bounded. This implies, using (\ref{er1pp}) and (\ref{er1qq}), that $e_{r+1,i,pp}$  and $e_{r+1,i,qq}$ grow at most linearly with time. On the other hand, from (\ref{er2pq}) and (\ref{er2qp}), if the time derivatives of $f_q(p(t),q(t))$ and $g_p(p(t),q(t))$ are also bounded with time,  $e_{r+2,i,pq}$  and $e_{r+2,i,qp}$ grow at most linearly. The same linear growth will be observed for $b_{r+2,i,pp}$ and $b_{r+2,i,qq}$ if the second time derivatives of $f_p(p(t),q(t))$ and $g_q(p(t),q(t))$ are bounded. This, together with (\ref{er2ipp}) and (\ref{er2iqq}) imply that $e_{r+2,i,pp}$ and $e_{r+2,i,qq}$ grow at most quadratically. And the argument can proceed inductively for the rest of terms.

Note finally that, if the starting procedure has order $r+1$, from the end of Theorem \ref{th1}, it follows that
$$e_{r,i,pp}(t)=e_{r,i,qq}(t)=b_{r+1,i,pp}(t)=b_{r+1,i,qq}(t)=e_{r+1,pq}(t)=e_{r+1,qp}(t)=0.$$
Therefore,  $e_{r+1,i,pp}$ and $e_{r+1,i,qq}$ are bounded with time and  if $f_q(p(t),q(t))$ and $g_p(p(t),q(t))$ are also bounded with time, the same happens with $e_{r+2,i,pq}$ and $e_{r+2,i,qp}$. This implies that, if $f_p(p(t),q(t))$ and $g_q(p(t),q(t))$ and their first time derivatives are also bounded, so $b_{r+2,i,pp}$ and $b_{r+2,i,qq}$ are. Due to that, $e_{r+2,i,pp}$  and $e_{r+2,i,qq}$  grow at most linearly and proceeding inductively, $e_{r+3,i,pp}$ and $e_{r+3,i,qq}$ grow at most quadratically, etc.

\end{proof}

\begin{remark}
When the problem is separable, i.e.,
\begin{eqnarray}
f=f(q), \quad g=g(p),
\nonumber
\end{eqnarray}
then not only the transition matrices associated to (\ref{eripp}) and (\ref{eriqq}) are bounded, but, since $f_p=g_q=0$, they are actually the identity. This simplifies formulas (\ref{er1pp})-(\ref{er2iqq}), but the conclusion is the same than that of the non-separable case.
\end{remark}

\begin{remark}
As a comparison, we notice that the separable case is studied in detail in \cite{CH} and our conclusions are similar to those obtained there: For an accurate enough starting procedure,  the parasitic solution components are under control for $t-t_0=O(h^{-2})$ (see the end of Section 3.5 and the concluding remarks in \cite{CH} ).
\end{remark}

\section{Conservation of invariants}\label{sec4}
In this section we will study the behaviour of PLMMs with respect to the conserved quantities of (\ref{system}).
\subsection{Smooth part of the numerical solution}
From the asymptotic expansion of the global error (\ref{ae}) in Theorem \ref{th1}, we will consider the \lq smooth\rq\ part associated to the root $x_{1}=1$ and define
\begin{eqnarray}
\left( \begin{array}{c} p_h(t) \\ q_h(t) \end{array} \right) := \left( \begin{array}{c} p(t) \\ q(t) \end{array} \right)+ \sum_{j=r}^{2r-1} h^j \left( \begin{array}{c} e_{j,1,p}(t) \\ e_{j,1,q}(t) \end{array} \right),\label{bcad60}
\end{eqnarray}
where $(p(t),q(t))$ is the solution of (\ref{system}).
\begin{lemma}
Let $I$ be a smooth invariant of (\ref{system}). Then,
\begin{eqnarray}
\frac{d}{d t} I \left( \begin{array}{c} p_h(t) \\ q_h(t) \end{array} \right)= - \sum_{j=r}^{2r-1} h^j (\nabla I) \left( \begin{array}{c} p(t) \\ q(t) \end{array} \right)^T \left( \begin{array}{c} c_{j,p} p^{(j+1)}(t) \\ c_{j,q} q^{(j+1)} (t) \end{array} \right)+ O(h^{2r}). \nonumber
\end{eqnarray}
\label{Ismooth}
\end{lemma}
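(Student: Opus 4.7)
The plan is to Taylor expand $I$ around the exact solution, differentiate in $t$, and use the ODE (\ref{ej1}) for the coefficients $(e_{j,1,p},e_{j,1,q})$ together with the fact that $\nabla I(p,q)^T\delta$ is a first integral of the variational system (\ref{ecvar}) whenever $I$ is invariant for (\ref{system}).

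First I would write, by Taylor's theorem applied to $I$ at $(p(t),q(t))$ and using (\ref{bcad60}),
\begin{align*}
I\!\begin{pmatrix} p_h(t) \\ q_h(t) \end{pmatrix}=I\!\begin{pmatrix} p(t) \\ q(t) \end{pmatrix}+\sum_{j=r}^{2r-1}h^j(\nabla I)\!\begin{pmatrix} p(t) \\ q(t) \end{pmatrix}^{\!T}\!\begin{pmatrix} e_{j,1,p}(t) \\ e_{j,1,q}(t) \end{pmatrix}+R(t,h),
\end{align*}
where, because $(p_h-p,q_h-q)=O(h^r)$ uniformly in $t$ together with all its time derivatives (all the involved functions being smooth), both $R(t,h)$ and $\dot R(t,h)$ are $O(h^{2r})$ on the working interval.

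Next I would differentiate in $t$ term by term. The derivative of $I(p(t),q(t))$ vanishes since $I$ is invariant along the exact flow. For each $j$-th correction the key identity I would establish is
\begin{align*}
\frac{d}{dt}\!\left[(\nabla I)(p,q)^T\delta(t)\right]=0\qquad\text{whenever}\qquad\dot{\delta}(t)=\left(\begin{array}{cc} f_p & f_q \\ g_p & g_q \end{array}\right)\!\delta(t),
\end{align*}
i.e.\ that $(\nabla I)(p,q)^T\delta$ is conserved along (\ref{ecvar}). This follows by differentiating the pointwise invariance identity $(\nabla I)(P,Q)^T(f(P,Q),g(P,Q))^T\equiv 0$ with respect to $(P,Q)$ to obtain $(f,g)\,H_I+(\nabla I)^T J=0$, where $H_I$ is the Hessian of $I$ and $J$ the Jacobian of the vector field, and combining this with $\frac{d}{dt}\nabla I(p,q)=H_I(p,q)(f,g)^T$ along the exact flow; the two contributions cancel.

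Since (\ref{ej1}) asserts that $(e_{j,1,p},e_{j,1,q})^T$ solves the variational equation (\ref{ecvar}) forced by $-(c_{j,p}p^{(j+1)},c_{j,q}q^{(j+1)})^T$, Duhamel combined with the identity above yields
\begin{align*}
\frac{d}{dt}\!\left[(\nabla I)(p,q)^T\!\begin{pmatrix} e_{j,1,p} \\ e_{j,1,q} \end{pmatrix}\right]=-(\nabla I)(p,q)^T\!\begin{pmatrix} c_{j,p}p^{(j+1)} \\ c_{j,q}q^{(j+1)} \end{pmatrix}.
\end{align*}
Multiplying by $h^j$, summing over $j=r,\dots,2r-1$ and adding $\dot R(t,h)=O(h^{2r})$ gives the claimed formula. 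The only non-routine ingredient is the variational first-integral identity; the rest is straightforward Taylor expansion together with the structure of (\ref{ej1}).
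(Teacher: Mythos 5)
Your argument is correct and follows essentially the same route as the paper: Taylor-expand $I$ about the exact solution, differentiate in $t$, and use (\ref{ej1}) so that the homogeneous (variational) part is killed by the identity $\frac{d}{dt}[\nabla I(p,q)]^T\delta+\nabla I(p,q)^T J\delta=0$, leaving only the forcing term. The only (immaterial) difference is how that identity is obtained — you differentiate the pointwise invariance relation $\nabla I^T(f,g)^T\equiv 0$ in the phase variables, whereas the paper derives it (up to $O(\|\delta\|^2)$) by considering perturbed exact trajectories; also, no Duhamel formula is actually needed at the last step, just linearity of (\ref{ej1}).
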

\begin{proof}
Let $\delta_0\in\mathbb{R}^{n}$ be an arbitrary initial perturbation. Then the exact solution of (\ref{system}) with initial condition $(p_0,q_0)^T+\delta_0$ can be written as  $(p(t),q(t))^T+\delta(t)$ where $\delta(t)\in\mathbb{R}^{n}, t>t_0$ satisfies in first order the differential system (\ref{ecvar}) with $\delta(t_0)=\delta_0$.
Since $I$ is invariant and
\begin{eqnarray*}
I\left((p(t),q(t))^T+\delta(t)\right)=I\left((p(t),q(t))^T\right)+\nabla I\left((p(t),q(t))\right)^T\delta(t)+O\left(||\delta(t)||^{2}\right),
\end{eqnarray*}
from (\ref{ecvar}) there holds
\begin{eqnarray}
\lefteqn{\hspace{-5cm}\frac{d}{dt}[ (\nabla I) \left( \begin{array}{c} p(t) \\ q(t) \end{array} \right)] ^T \delta(t) +  (\nabla I) \left( \begin{array}{c} p(t) \\ q(t) \end{array} \right)^T \left( \begin{array}{cc} f_p(p(t),q(t)) & f_q(p(t),q(t)) \\ g_p(p(t),q(t)) & g_q(p(t),q(t)) \end{array} \right) \delta(t)} \nonumber \\
 &&=O\left(||\delta(t)||^{2}\right),\label{bcad61b}
\end{eqnarray}
which is valid for any arbitrary perturbation $\delta(t)$. Applying (\ref{bcad61b}) with $\delta(t)=(p_{h}(t),q_{h}(t))^{T}-(p(t),q(t))^{T}$, with $(p_{h}(t),q_{h}(t))^{T}$ given by (\ref{bcad60}), and using (\ref{ej1}) we have
\begin{eqnarray}
\lefteqn{
\frac{d}{d t} I \left( \begin{array}{c} p_h(t) \\ q_h(t) \end{array} \right)= \frac{d}{d t} \bigg[ I \left( \begin{array}{c} p(t) \\ q(t) \end{array} \right)+\sum_{j=r}^{2r-1} h^j  (\nabla I) \left( \begin{array}{c} p(t) \\ q(t) \end{array} \right)^T \left( \begin{array}{c} e_{j,1,p}(t) \\ e_{j,1,q}(t) \end{array} \right) +O(h^{2r}) \bigg]
}
\nonumber
\\
 &=& \sum_{j=r}^{2r-1} h^j \bigg[ \frac{d}{dt} [(\nabla I) \left( \begin{array}{c} p(t) \\ q(t) \end{array} \right)]^T \left( \begin{array}{c} e_{j,1,p}(t) \\ e_{j,1,q}(t) \end{array} \right) + (\nabla I) \left( \begin{array}{c} p(t) \\ q(t) \end{array} \right)^T \left( \begin{array}{c} \dot{e}_{j,1,p}(t) \\ \dot{e}_{j,1,q}(t) \end{array} \right) \bigg]
 \nonumber \\
 &&+O(h^{2r})   \nonumber \\
 &=& \sum_{j=r}^{2r-1} h^j \bigg[ \frac{d}{dt} [(\nabla I) \left( \begin{array}{c} p(t) \\ q(t) \end{array} \right)]^T \left( \begin{array}{c} e_{j,1,p}(t) \\ e_{j,1,q}(t) \end{array} \right)
 \nonumber
 \\
&& \hspace{0.1cm}+ (\nabla I) \left( \begin{array}{c} p(t) \\ q(t) \end{array} \right)^T \big[ \left( \begin{array}{cc} f_p(p(t),q(t)) & f_q(p(t),q(t)) %
\\ g_p(p(t),q(t)) & g_q(p(t),q(t)) \end{array} \right) \left( \begin{array}{c} e_{j,1,p}(t) \\ e_{j,1,q}(t) \end{array} \right)- \left( \begin{array}{c}  c_{j,p} p^{(j+1)}(t) \\ c_{j,q} q^{(j+1)} (t) \end{array} \right)
 \big] \bigg] \nonumber \\
 &&+ O(h^{2r}) \nonumber \\
 &=&-\sum_{j=r}^{2r-1} h^j (\nabla I) \left( \begin{array}{c} p(t) \\ q(t) \end{array} \right)^T \left( \begin{array}{c}  c_{j,p} p^{(j+1)}(t) \\ c_{j,q} q^{(j+1)} (t) \end{array} \right) + O(h^{2r}). \nonumber
 \end{eqnarray}
\end{proof}
\begin{corollary}
Assume that (\ref{system}) is Hamiltonian, with
\begin{eqnarray*}
f(p,q)=-\nabla_q H(p,q),\quad g(p,q)=\nabla_p H(p,q),
\end{eqnarray*}
for some smooth Hamiltonian function $H(p,q)$. Then
\begin{eqnarray}
\frac{d}{d t} H \left( \begin{array}{c} p_h(t) \\ q_h(t) \end{array} \right)=  \sum_{j=r}^{2r-1} h^j [c_{j,q} \dot{p}(t)^T q^{(j+1)}(t)-c_{j,p} \dot{q}(t)^T p^{(j+1)}(t)]+O(h^{2r}).
\label{ham}
\end{eqnarray}

\label{corol_ham}
\end{corollary}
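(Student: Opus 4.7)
The plan is to recognise this as a direct specialisation of Lemma \ref{Ismooth} with $I=H$, combined with the algebraic identities that follow from the Hamiltonian structure of the vector field. Since $H$ is conserved along solutions of a Hamiltonian system, $H$ qualifies as a smooth invariant of (\ref{system}), so the hypotheses of Lemma \ref{Ismooth} are met and its conclusion can be applied verbatim.

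The first step is to write out the conclusion of Lemma \ref{Ismooth} with $I$ replaced by $H$, which gives
\begin{eqnarray*}
\frac{d}{dt} H \!\left( \begin{array}{c} p_h(t) \\ q_h(t) \end{array} \right)
 = -\sum_{j=r}^{2r-1} h^j (\nabla H)\!\left( \begin{array}{c} p(t) \\ q(t) \end{array} \right)^T \left( \begin{array}{c} c_{j,p}\,p^{(j+1)}(t) \\ c_{j,q}\,q^{(j+1)}(t) \end{array} \right) + O(h^{2r}).
\end{eqnarray*}
Next I would split $\nabla H = (\nabla_p H,\,\nabla_q H)^T$ so that the inner product above becomes
$c_{j,p}\,(\nabla_p H)^T p^{(j+1)} + c_{j,q}\,(\nabla_q H)^T q^{(j+1)}$, all evaluated at $(p(t),q(t))$.

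The final step uses the Hamiltonian identities, applied along the exact solution: from $\dot p = f = -\nabla_q H$ and $\dot q = g = \nabla_p H$, we obtain $\nabla_p H(p(t),q(t)) = \dot q(t)$ and $\nabla_q H(p(t),q(t)) = -\dot p(t)$. Substituting turns the bracketed expression into $c_{j,p}\,\dot q(t)^T p^{(j+1)}(t) - c_{j,q}\,\dot p(t)^T q^{(j+1)}(t)$, and combining with the overall minus sign from Lemma \ref{Ismooth} yields exactly (\ref{ham}).

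There is no real obstacle here; the argument is purely a sign-bookkeeping exercise, with the only thing to be careful about being the order of the two terms inside the bracket (which flips once the outer minus is absorbed) so that the final expression matches the stated form of the corollary.
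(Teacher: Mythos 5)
Your proof is correct and is exactly the intended argument: the paper treats the corollary as an immediate consequence of Lemma \ref{Ismooth} with $I=H$, using $\nabla_p H=\dot q$ and $\nabla_q H=-\dot p$ along the exact solution, and your sign bookkeeping reproduces (\ref{ham}) precisely.
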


\subsubsection{Symmetric methods}
In this case, the above result can be simplified as follows:
\begin{theorem}
When both methods $(\rho_p,\sigma_p)$ and $(\rho_q,\sigma_q)$ are symmetric, then
\begin{eqnarray}
\lefteqn{H\left( \begin{array}{c} p_h(t) \\ q_h(t) \end{array} \right)-H\left( \begin{array}{c} p_h(t_0) \\ q_h(t_0) \end{array} \right)}\nonumber \\
&&=\sum_{k=r/2}^{r-1} h^{2k} \bigg[c_{2k,q}\sum_{l=1}^k (-1)^{l+1}[p^{(l)}(t)^T q^{(2k+1-l)}(t)-p^{(l)}(t_0)^T q^{(2k+1-l)}(t_0)] \nonumber \\
&& \hspace{2.2cm} -c_{2k,p}\sum_{l=1}^k (-1)^{l+1}[q^{(l)}(t)^T p^{(2k+1-l)}(t)-q^{(l)}(t_0)^T p^{(2k+1-l)}(t_0)] \bigg] \nonumber \\
&&+\sum_{k=r/2}^{r-1} h^{2k} (-1)^{k+2} (c_{2k,q}-c_{2k,p}) \int_{t_0}^t p^{(k+1)}(s)^T q^{(k+1)}(s)ds +O(t h^{2r}). \label{ham_smooth}
\end{eqnarray}

\label{rem_ham_sym}
\end{theorem}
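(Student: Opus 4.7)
The plan is to take Corollary \ref{corol_ham} as the starting point and integrate it in time from $t_{0}$ to $t$. The symmetry hypothesis enters immediately through the standard fact recalled in Section \ref{sec3} that, for symmetric $(\rho_{p},\sigma_{p})$ and $(\rho_{q},\sigma_{q})$, one has $c_{j,p}=c_{j,q}=0$ for odd~$j$. Since symmetric methods have even order, this collapses the sum over $j\in\{r,\dots,2r-1\}$ in (\ref{ham}) to its even-indexed terms $j=2k$ with $k\in\{r/2,\dots,r-1\}$. After integration, the $O(h^{2r})$ local remainder in (\ref{ham}) becomes an $O(t h^{2r})$ remainder, matching the one in the statement.

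The core of the proof is then a repeated integration by parts of the two scalar integrals
\begin{equation*}
\int_{t_{0}}^{t}\dot p(s)^{T}q^{(2k+1)}(s)\,ds,\qquad \int_{t_{0}}^{t}\dot q(s)^{T}p^{(2k+1)}(s)\,ds,
\end{equation*}
performed symmetrically so as to transfer derivatives from $q$ (respectively $p$) back onto $p$ (respectively $q$) until the orders of differentiation on both factors meet in the middle. Concretely, using the identity $\int p^{(l)T}q^{(m)}\,ds=[p^{(l)T}q^{(m-1)}]_{t_{0}}^{t}-\int p^{(l+1)T}q^{(m-1)}\,ds$ iteratively $k$ times starting from $l=1$, $m=2k+1$, I obtain
\begin{equation*}
\int_{t_{0}}^{t}\dot p^{T}q^{(2k+1)}\,ds=\sum_{l=1}^{k}(-1)^{l+1}\bigl[p^{(l)T}q^{(2k+1-l)}\bigr]_{t_{0}}^{t}+(-1)^{k}\int_{t_{0}}^{t}p^{(k+1)T}q^{(k+1)}\,ds,
\end{equation*}
and the analogous formula with $p\leftrightarrow q$ for the second integral. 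The iteration terminates exactly at the ``balanced'' integrand $p^{(k+1)T}q^{(k+1)}$ because any further integration by parts would simply reproduce the same form up to sign, so no further boundary extraction is possible: this is the one piece of the sum that survives as an integral and gives the last line of the target identity.

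The final step is bookkeeping: multiply the two formulas by $c_{2k,q}$ and $-c_{2k,p}$ respectively, subtract, sum over $k\in\{r/2,\dots,r-1\}$ weighted by $h^{2k}$, and use $(-1)^{k}=(-1)^{k+2}$ to match the sign convention of (\ref{ham_smooth}). The two sums of boundary terms reproduce the first bracketed double sum of the statement, while the leftover balanced integrals combine into the coefficient $(-1)^{k+2}(c_{2k,q}-c_{2k,p})$ in front of $\int_{t_{0}}^{t}p^{(k+1)T}q^{(k+1)}\,ds$.

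The only genuine obstacle is keeping track of signs and index ranges in the repeated integration by parts, and checking that the integrated $O(h^{2r})$ remainder really is $O(t h^{2r})$ uniformly on the bounded time interval under consideration (which follows from the smoothness of $(p,q)$ assumed in Section \ref{sec2}, so that the integrand in the remainder is bounded on $[t_{0},t_{0}+T]$). No additional property of the Hamiltonian structure beyond the one already exploited in Corollary \ref{corol_ham} is needed; the symmetry of the method does all the remaining work through the vanishing of the odd-$j$ error constants.
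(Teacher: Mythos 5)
Your proposal is correct and follows essentially the same route as the paper: the paper records the repeated integration by parts as the total-derivative identity (\ref{difpq}) and substitutes it into (\ref{ham}) before integrating, while you integrate (\ref{ham}) first and then integrate by parts $k$ times, which is the same computation. The sign and index bookkeeping, the use of $c_{j,p}=c_{j,q}=0$ for odd $j$, and the $O(th^{2r})$ remainder all match the paper's argument.
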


\begin{proof}
Note first that, when the methods are symmetric then $c_{j,p}=c_{j,q}=0$ for odd $j$. Moreover, for even $j$ ($j=2k$),
\begin{eqnarray}
\dot{q}^T p^{(2k+1)}=\frac{d}{dt}\bigg[ \sum_{l=1}^k (-1)^{l+1} {q^{(l)}}^T p^{(2k+1-l)} \bigg]+(-1)^{k+2} {q^{(k+1)}}^T p^{(k+1)}, \nonumber \\
\dot{p}^T q^{(2k+1)}=\frac{d}{dt}\bigg[ \sum_{l=1}^k (-1)^{l+1} {p^{(l)}}^T q^{(2k+1-l)} \bigg]+(-1)^{k+2} {p^{(k+1)}}^T q^{(k+1)}. \label{difpq}
\end{eqnarray}
Applying (\ref{difpq}) to (\ref{ham}) leads to (\ref{ham_smooth}).
\end{proof}
\begin{remark}
When both methods $(\rho_p,\sigma_p)$ and $(\rho_q,\sigma_q)$ are the same and symmetric, then (\ref{difpq}) implies that the sum in (\ref{ham}) can be seen as a total differential. This is in close connection with the fact stated in \cite{CH} that, in such a case, there exists a modified Hamiltonian satisfied, except for a small remainder, by the smooth part of the solution (\ref{bcad60}). In such a case, however, it is more difficult to control the terms of the error associated to the common roots of $\rho_p$ snd $\rho_q$  different from $1$, as stated in Subsection \ref{coefncr}.  A strategy which is developed in \cite{CH} to avoid that is to consider two different symmetric methods for which $c_{r,q}=c_{r,p}$ and so that their first characteristic polynomials just have the common root $x_1=1$. In such a way, the remainder for the modified Hamiltonian of the smooth part is $O(h^{r+2})$. As distinct, in Section \ref{sec5} of this manuscript, we will center on getting a bound for the last integral in (\ref{ham_smooth}) for a case study, so that the advantageous behaviour of symmetric methods without satisfying the condition $c_{r,q}=c_{r,p}$ can be explained in detail.
\end{remark}
\subsection{Whole numerical solution when $m=1$}
We will now study the whole numerical solution. As stated in Subsection \ref{coefcurd1}, the coefficients of the error associated to the common unitary roots different from $x_1=1$ typically lead to an exponential error growth with time. Because of that,  we will assume from now on that such common roots do not exist, i.e. $m=1$. Then, we have the following result:


\begin{theorem}
Let $I=I(p,q)$ be a smooth quantity which is invariant by the solutions of (\ref{system}). Then, when using a PLMM (\ref{eq12}) for which $m=1$, it holds that
\begin{eqnarray}
\lefteqn{I\left( \begin{array}{c} p_n \\ q_n \end{array} \right)- I\left( \begin{array}{c} p_0 \\ q_0 \end{array} \right)} \nonumber \\
&=& \sum_{j=r}^{2r-1} h^j \nabla I \left( \begin{array}{c} p(t_n) \\ q(t_n) \end{array} \right)^T \bigg[  \sum_{i=2}^{k_p'} x_{i,p}^n \left( \begin{array}{c} e_{j,i,pp}(t_n) \\ e_{j,i,qp}(t_n) \end{array} \right)+ \sum_{i=2}^{k_q'} x_{i,q}^n \left( \begin{array}{c} e_{j,i,pq}(t_n) \\ e_{j,i,qq}(t_n) \end{array} \right) \bigg] \nonumber \\
&&- \sum_{j=r}^{2r-1} h^j \int_{t_0}^{t_n} (\nabla I) \left( \begin{array}{c} p(s) \\ q(s) \end{array} \right)^T \left( \begin{array}{c} c_{j,p} p^{(j+1)}(s) \\ c_{j,q} q^{(j+1)} (s) \end{array} \right)ds \nonumber \\
&&+\sum_{j=r}^{2r-1} h^j \nabla I \left( \begin{array}{c} p_0 \\ q_0 \end{array} \right)^T  \left( \begin{array}{c} e_{j,1,p}(t_0) \\ e_{j,1,q}(t_0) \end{array} \right)+ O(h^{2r}). \label{thI}
\end{eqnarray}
\end{theorem}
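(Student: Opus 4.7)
The plan is to telescope the difference $I(p_n,q_n)-I(p_0,q_0)$ through the value of $I$ on the smooth part of the expansion:
\begin{eqnarray*}
I\left( \begin{array}{c} p_n \\ q_n \end{array} \right)- I\left( \begin{array}{c} p_0 \\ q_0 \end{array} \right)
&=& \bigg[I\left( \begin{array}{c} p_n \\ q_n \end{array} \right)- I\left( \begin{array}{c} p_h(t_n) \\ q_h(t_n) \end{array} \right)\bigg] \\
&&+ \bigg[I\left( \begin{array}{c} p_h(t_n) \\ q_h(t_n) \end{array} \right)- I\left( \begin{array}{c} p_h(t_0) \\ q_h(t_0) \end{array} \right)\bigg] \\
&&+ \bigg[I\left( \begin{array}{c} p_h(t_0) \\ q_h(t_0) \end{array} \right)- I\left( \begin{array}{c} p_0 \\ q_0 \end{array} \right)\bigg],
\end{eqnarray*}
with $(p_h,q_h)$ as in (\ref{bcad60}). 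This splits the error into a parasitic piece at the final time, a smooth-part contribution, and a boundary term at $t_0$, which are exactly the three groups appearing in (\ref{thI}).

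For the first bracket I would invoke Theorem \ref{th1} with $m=1$: then $(p_n,q_n)-(p_h(t_n),q_h(t_n))$ is precisely the sum of the non-common-root contributions $\sum_{j=r}^{2r-1} h^j [\sum_{i\ge2} x_{i,p}^n(e_{j,i,pp},e_{j,i,qp})+\sum_{i\ge2} x_{i,q}^n(e_{j,i,pq},e_{j,i,qq})]$, plus $O(h^{2r})$. Since this perturbation is $O(h^r)$, a Taylor expansion of $I$ around $(p(t_n),q(t_n))$ gives the first sum in (\ref{thI}), with the replacement of the gradient evaluation point producing only an $O(h^{2r})$ error (because $\nabla I(p_h(t_n),q_h(t_n))-\nabla I(p(t_n),q(t_n))=O(h^r)$ and it multiplies another $O(h^r)$ quantity). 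The third bracket is handled identically: since $(p(t_0),q(t_0))=(p_0,q_0)$, one has $(p_h(t_0),q_h(t_0))-(p_0,q_0)=\sum h^j (e_{j,1,p}(t_0),e_{j,1,q}(t_0))$, and Taylor expansion of $I$ at $(p_0,q_0)$ yields the boundary term with $O(h^{2r})$ remainder.

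The second bracket is the heart of the matter and is dispatched by integrating the identity of Lemma \ref{Ismooth} from $t_0$ to $t_n$:
\begin{eqnarray*}
I\left( \begin{array}{c} p_h(t_n) \\ q_h(t_n) \end{array} \right)- I\left( \begin{array}{c} p_h(t_0) \\ q_h(t_0) \end{array} \right)
= -\sum_{j=r}^{2r-1} h^j \int_{t_0}^{t_n} (\nabla I)\left( \begin{array}{c} p(s) \\ q(s) \end{array} \right)^T \left( \begin{array}{c} c_{j,p} p^{(j+1)}(s) \\ c_{j,q} q^{(j+1)}(s) \end{array} \right)ds + O(h^{2r}),
\end{eqnarray*}
the absorbed term being $(t_n-t_0)\cdot O(h^{2r})=O(h^{2r})$ for the fixed $t_n$ considered in Theorem \ref{th1}. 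Adding the three pieces gives precisely (\ref{thI}).

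The only delicate point is the bookkeeping of the remainders: one must verify that every use of a first-order Taylor expansion of $I$ really produces an $O(h^{2r})$ error, which relies on the perturbations $(p_n,q_n)-(p_h(t_n),q_h(t_n))$, $(p_n,q_n)-(p(t_n),q(t_n))$ and $(p_h(t_0),q_h(t_0))-(p_0,q_0)$ all being $O(h^r)$ and on the smoothness of $I$. The hypothesis $m=1$ is used only to ensure that no $x_i^n$ with $i\ge 2$ survives in the smooth sum, so that the first group of terms in (\ref{thI}) contains only the parasitic coefficients indexed by the non-common roots $x_{i,p}$ and $x_{i,q}$.
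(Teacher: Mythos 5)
Your proposal is correct and follows essentially the same route as the paper: the identical three-term decomposition through $(p_h(t_n),q_h(t_n))$ and $(p_h(t_0),q_h(t_0))$, with the first bracket handled via Theorem \ref{th1} (with $m=1$) and a first-order Taylor expansion of $I$, the second by integrating Lemma \ref{Ismooth}, and the third from (\ref{bcad60}) at $t=t_0$. Your additional remarks on the $O(h^{2r})$ bookkeeping only make explicit what the paper leaves implicit.
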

\begin{proof}
The proof is just based on the decomposition
\begin{eqnarray}
I\left( \begin{array}{c} p_n \\ q_n \end{array} \right)- I\left( \begin{array}{c} p_0 \\ q_0 \end{array} \right)
&=&\bigg[I\left( \begin{array}{c} p_n \\ q_n \end{array} \right)- I\left( \begin{array}{c} p_h(t_n) \\ q_h(t_n) \end{array} \right)\bigg]
+ \bigg[I\left( \begin{array}{c} p_h(t_n) \\ q_h(t_n) \end{array} \right)- I\left( \begin{array}{c} p_h(t_0) \\ q_h(t_0) \end{array} \right)\bigg]\label{nsei}\nonumber \\&& +
\bigg[I\left( \begin{array}{c} p_h(t_0) \\ q_h(t_0) \end{array} \right)-I\left( \begin{array}{c} p_0 \\ q_0 \end{array} \right)\bigg].
\nonumber
\end{eqnarray}
Then, the first term in (\ref{thI}) follows from (\ref{ae}) with $m=1$, the definition (\ref{bcad60}), and the property that,
 when evaluating $\nabla I$, $(p_h(t),q_h(t))^T$ differs from $(p(t),q(t))^T$ in $O(h^r)$ terms. As for the second term, Lemma \ref{Ismooth} can be directly applied. Finally, the last term in (\ref{thI}) is a consequence of (\ref{bcad60}) at $t=t_0$.

\end{proof}

\begin{remark}
Since $x_{i,p}^n$ and $x_{i,q}^n$ have unit modulus, the time behaviour of (\ref{thI}) is determined by the behaviour of the error on the smooth numerical solution taking Lemma \ref{Ismooth} and Theorem \ref{rem_ham_sym} into account, as well as the behaviour of  terms
\begin{eqnarray}
&&\nabla I \left( \begin{array}{c} p(t_n) \\ q(t_n) \end{array} \right)^T \left( \begin{array}{c} e_{j,i,pp}(t_n) \\ e_{j,i,qp}(t_n) \end{array} \right), \quad \nabla I \left( \begin{array}{c} p(t_n) \\ q(t_n) \end{array} \right)^T \left( \begin{array}{c} e_{j,i,pq}(t_n) \\ e_{j,i,qq}(t_n) \end{array} \right).  \nonumber
\end{eqnarray}
\label{rem_inv_time}
\end{remark}

\section{Double pendulum problem}\label{sec5}
The previous results will be applied in this section to explain the numerical integration of the double pendulum problem with PLMMs.
We will scale the problem so that gravity acceleration can be considered equal to $1$ and, in particular,  we will take two masses where $m_1=1$, $m_2=2$ and two inextensible weightless strings of unit length. (For the general case, see \cite{L}, although the conclusions with respect to error growth with time will be the same whenever the oscillations are small enough and the quotient of the associated normal frequencies is not rational.) In our particular case, the kinetic and potencial energies are given respectively by
\begin{eqnarray}
T&=& \frac{1}{2}[ 3 \dot{q}_1^2+2 \dot{q}_2^2+ 4 \dot{q}_1 \dot{q}_2 \cos(q_2-q_1)], \quad V=-(3 \cos(q_1)+2 \cos(q_2)), \nonumber
\end{eqnarray}
where $(q_1,q_2)$ denote the angles which both strings form with the vertical line. As the Lagrangian is $L=T-V$, it is well-known that the associated momentums are
\begin{eqnarray}
p_1&=& L_{\dot{q}_1}=3 \dot{q}_1+ 2 \dot{q}_2\cos(q_2-q_1), \nonumber \\
p_2&=& L_{\dot{q}_2}=2 \dot{q}_2+ 2 \dot{q}_1\cos(q_2-q_1). \label{pdp}
\end{eqnarray}
In such a way, $(p_1 \, p_2)^T= M(q_1,q_2) (\dot{q}_1 \, \dot{q}_2)^T$ with
\begin{eqnarray}
M(q_1,q_2)=\left( \begin{array}{cc} 3 & 2 \cos(q_2-q_1) \\ 2 \cos(q_2-q_1) & 2 \end{array} \right). \nonumber
\end{eqnarray}
and the kinetic energy can be written as
\begin{eqnarray}
T&=& \frac{1}{2} ( \dot{q}_1 \, \dot{q}_2) M(q_1,q_2) \left( \begin{array}{c} \dot{q}_1 \\ \dot{q}_2 \end{array} \right) \nonumber \\
&=& \frac{1}{2} (p_1 \, p_2)^T M(q_1,q_2)^{-1} M(q_1,q_2)  M(q_1,q_2)^{-1} \left( \begin{array}{c} p_1 \\ p_2 \end{array} \right) \nonumber \\
&=& \frac{1}{2} (p_1 \, p_2)^T M(q_1,q_2)^{-1} \left( \begin{array}{c} p_1 \\ p_2 \end{array} \right). \nonumber
\end{eqnarray}
Then, the corresponding differential equations are written as a nonseparable Hamiltonian system with Hamiltonian
\begin{eqnarray}
H=T+V.
\label{ham_dd}
\end{eqnarray}
We will be interested in small oscillations of the double pendulum, which, according to \cite{L}, can be approximated by the solution of the Hamiltonian system where all the terms smaller than those associated with the second power of $q_1,q_2, \dot{q}_1,\dot{q}_2$ are neglected. In this manner, a linear system turns up, which can be exactly solved, giving rise to
\begin{eqnarray}
q_1(t)&=&A \cos( \omega_1 t-\delta_1)+ B \cos( \omega_2 t-\delta_2), \nonumber \\
q_2(t)&=&A c_{+} \cos( \omega_1 t-\delta_1)+ B c_{-} \cos( \omega_2 t-\delta_2), \label{aproxdp}
\end{eqnarray}
where
$$
\omega_1=3(1+\sqrt{\frac{2}{3}}), \quad \omega_2= 3(1-\sqrt{\frac{2}{3}}), \quad c_{+}=-\frac{\sqrt{6}}{2}, \quad c_{-}=\frac{\sqrt{6}}{2},
$$
and the constants $A,B, \delta_1, \delta_2$ are arbitrary constants to be determined by initial conditions.

Then we notice that
\begin{eqnarray}
\dot{q}_1(t)&=&-A \omega_1 \sin (\omega_1 t-\delta_1)- B \omega_2 \sin( \omega_2 t-\delta_2), \nonumber \\
\dot{q}_2(t)&=&-A c_{+} \omega_1 \sin( \omega_1 t-\delta_1)- B c_{-} \omega_2 \sin( \omega_2 t-\delta_2). \label{q12d}
\end{eqnarray}
From (\ref{pdp}) and (\ref{q12d}), we have
\begin{eqnarray}
p_1(t)&=& 3[-A \omega_1 \sin(\omega_1 t-\delta_1)- B \omega_2 \sin(\omega_2 t-\delta_2)] \nonumber \\
&&+2[-A c_{+} \omega_1 \sin(\omega_1 t-\delta_1)- B c_{-}\omega_2 \sin(\omega_2 t-\delta_2)]\cos(\alpha(t)), \nonumber \\
p_2(t)&=& 2[-A c_{+} \omega_1 \sin(\omega_1 t-\delta_1)-B c_{-}\omega_2 \sin( \omega_2 t-\delta_2)] \nonumber \\
&&+2[-A  \omega_1 \sin(\omega_1 t-\delta_1)- B \omega_2 \sin(\omega_2 t-\delta_2)]\cos(\alpha(t)),
\label{paproxdp}
\end{eqnarray}
where
\begin{eqnarray}
\alpha(t)=A(c_{+}-1)\cos(\omega_1 t-\delta_1)+B(c_{-}-1) \cos(\omega_2 t-\delta_2).
\label{alpha}
\end{eqnarray}
Considering then (\ref{aproxdp}) as a good enough approximation of the exact solution, we will justify now how the error in the Hamiltonian associated to the smooth part of the numerical solution should grow when integrating with different types of linear multistep methods.

\subsection{Error associated to the smooth part of the numerical solution}

\subsubsection{Symmetric PLMMs}

First of all, when the method is a PLMM formed by two symmetric LMMs, Theorem \ref{rem_ham_sym} says how the error in the smooth part of the numerical solution behaves. As the time derivatives of $q_1$, $q_2$, $p_1$, $p_2$ are clearly bounded, we must analyse  the term of the form
$$\int_{t_0}^t p^{(k+1)}(s)^T q^{(k+1)}(s), \quad k=r/2,\dots,r-1.$$
To this end, using the second equation in (\ref{difpq}), undoing what has been done for part of the proof of the theorem, we must study whether
$$
\int_{t_0}^t \dot{p}(s)^T q^{(2k+1)}(s)ds= p(s)^T q^{(2k+1)}(s)|_{s=t_0}^{t}-\int_{t_0}^t p(s)^T q^{(2k+2)}(s) ds$$
is bounded with $t$. Obviously, the first term is again bounded. As for the second, we firstly notice that the even derivatives of $q_1$ and $q_2$ always consist of a linear combination of $\cos(\omega_1 t-\delta_1)$ and $\cos(\omega_2 t-\delta_2)$. Then, from  (\ref{paproxdp}), what we must study is the behaviour of
\begin{eqnarray}
&&\int_{t_0}^t \cos(\omega_i s- \delta_i) \sin(\omega_j s- \delta_j)ds, \nonumber \\
&&\int_{t_0}^t \cos(\omega_i s- \delta_i) \sin(\omega_j s- \delta_j)\cos(\alpha(s))ds, \quad i,j \in \{1,2\}. \label{intdp}
\end{eqnarray}
where, according to (\ref{alpha}), $\alpha(s)$ oscillates between $-|A(c_{+}-1)|-|B(c_{-}-1)|$ and $|A(c_{+}-1)|+|B(c_{-}-1)|$ but in a non-periodic way because $\omega_1/\omega_2$ is not rational. Using trigonometric identities, the first integral in (\ref{intdp}) can be written as
$$
\frac{1}{2}\int_{t_0}^t [\sin((\omega_i+\omega_j)s-\delta_i-\delta_j)-\sin((\omega_i-\omega_j)s+\delta_j-\delta_i))]ds,$$
which is obviously bounded with $t$. As for the second line in (\ref{intdp}), it is the integral of the same sine functions as those of the first one (which, at a period, determine the same area above and below the $s$-axis) but modulated in amplitude by another function which also oscillates with $s$. The fact that the frequency of that oscillation in amplitude is not related through a rational number to the frequencies $(\omega_i+\omega_j)$ and $(\omega_i-\omega_j)$ of the sine functions explains that, when $t$ grows, the integral is also  bounded.

\subsubsection{Non-symmetric PLMMs}

In the non-symmetric case and according to Corollary \ref{corol_ham}, the growth with time of the coefficients associated to the even powers of $h$ will behave in the same way as those for symmetric methods. However, when the methods are not symmetric, as $c_{j,q}$ and $c_{j,p}$ do not necessarily vanish for odd $j$, we must also look at the growth with time of the following expressions  which multiply the odd powers of $h$:
$$
\int_{t_0}^t \dot{p}(t)^T q^{(2k)}(t)dt, \quad \int_{t_0}^t \dot{q}(t)^T p^{(2k)}(t)dt, \qquad k=[r/2]+1,\dots,r.$$
By integration by parts,
\begin{eqnarray}
\int_{t_0}^t \dot{p}(t)^T q^{(2k)}(t)dt&=& p(t)^T q^{(2k)}(t)|_{t=t_0}^t -\int_{t_0}^t p(t)^T q^{(2k+1)}(t)dt, \nonumber \\
\int_{t_0}^t \dot{q}(t)^T p^{(2k)}(t) dt&=&\sum_{l=1}^{2k} (-1)^{l+1} q^{(l)}(t)^T p^{(2k-l)}(t)|_{t=t_0}^t+\int_{t_0}^t q^{(2k+1)}(t)^T p(t)dt. \nonumber
\end{eqnarray}
As the derivatives of $q_1,q_2,p_1,p_2$ are bounded with time, we only must look at the growth with time of
$$\int_{t_0}^t q^{(2k+1)}(t)^T p(t)dt.$$
From (\ref{aproxdp}), we can see that the odd derivatives of $q_1$ and $q_2$  consist of linear combinations of $\sin(\omega_i t-\delta_i)$ and $\sin(\omega_j t-\delta_j)$. From (\ref{paproxdp}), the previous integral will contain expressions of the form
$$
\int_{t_0}^t \sin^2 (\omega_i t-\delta_i)dt, \quad i=1,2,
$$ which obviously grow linearly with time because $\sin^2 (\omega_i t-\delta_i)$ is a positive and periodic function with period $\pi/\omega_i$. Thus, the integral is linearly additive at each period.

\subsection{Error associated to the non-common roots of the first characteristic polynomials of unit modulus}

 From Remark \ref{rem_inv_time} and considering that $\nabla H(p,q)=(\dot{q},-\dot{p})$, which is bounded in our problem, what we must study is the growth with time of $e_{j,i,pp}$, $e_{j,i,pq}$, $e_{j,i,qq}$, $e_{j,i,qp}$. In order to  apply Theorem \ref{th3}, we require that the derivatives of $f$ and $g$ at $(p(t),q(t))$ and their time derivatives are bounded with time and that the transition matrices associated to (\ref{eripp}) and (\ref{eriqq}) are also bounded.
Taking into account that, in our problem,
\begin{eqnarray}
H(p_1,p_2,q_1,q_2)&=&\frac{1}{2(6-4 \cos^2(q_2-q_1))}(2p_1^2-4 \cos(q_2-q_1)p_1 p_2+3 p_2^2)
\nonumber \\
&&-3 \cos(q_1)-2 \cos(q_2), \label{hamdp}
\end{eqnarray}
$f=-\nabla_q H$, $g=\nabla_p H$,  it is clear that the components of the Jacobian of $(f,g)^T$ at $(p_1(t),p_2(t),q_1(t),q_2(t))$ and their time derivatives are bounded. Let us see what happens with the transition matrices of (\ref{eripp}) and (\ref{eriqq}). We firstly notice that, after some calculations,
\begin{eqnarray}
f_p(p(t),q(t))=\left( \begin{array}{cc} -a(t) & -b(t) \\ a(t) & b(t) \end{array} \right), \quad g_q(p(t),q(t))=\left( \begin{array}{cc} a(t) & -a(t) \\ b(t) & -b(t) \end{array} \right),
\label{f_pg_q}
\end{eqnarray}
where
\begin{eqnarray}
a(t)&=& \frac{2\sin(2 \alpha(t))}{[1+2\sin^2(\alpha(t))]^2}p_1(t)-\frac{\sin(\alpha(t))[5-2 \sin^2(\alpha(t))]}{[1+2\sin^2(\alpha(t))]^2}p_2(t) \nonumber \\
&=&\frac{\sin(2 \alpha(t))}{1+2\sin^2(\alpha(t))}\dot{q}_1(t)-\frac{2 \sin(\alpha(t))}{1+2\sin^2(\alpha(t))}\dot{q}_2(t), \nonumber \\
b(t)&=&\frac{3 \sin(2 \alpha(t))}{[1+2\sin^2(\alpha(t))]^2}p_2(t)-\frac{\sin(\alpha(t))[5-2 \sin^2(\alpha(t))]}{[1+2\sin^2(\alpha(t))]^2}p_1(t) \nonumber \\
&=&-\frac{3\sin(\alpha(t))}{1+2\sin^2(\alpha(t))}\dot{q}_1(t)+\frac{\sin(2\alpha(t))}{1+2\sin^2(\alpha(t))}\dot{q}_2(t). \label{ab}
\end{eqnarray}
Here, for the second and fourth equalities, (\ref{pdp}) has been used and $\alpha(t)$ is the function in (\ref{alpha}). As it is well-known that the transition matrix $\Phi(t,t_0)$ associated to a linear differential system
$$
\dot{e}(t)=A(t) e(t)$$
satisfies that
$$
\mbox{det}(\Phi(t,t_0))=e^{ \int_{t_0}^t \mbox{tr}(A(\tau))d\tau},$$
then, from (\ref{f_pg_q}), the determinants of the  transition matrices associated to (\ref{eripp}) and (\ref{eriqq}) are
 \begin{eqnarray}
 e^{ \lambda_{p,i,p} \int_{t_0}^t [b(\tau)-a(\tau)]d \tau}, \quad  e^{ -\lambda_{q,i,q} \int_{t_0}^t [b(\tau)-a(\tau)]d \tau},
 \label{exponenciales}
 \end{eqnarray}
 respectively. Now, from (\ref{ab}),
 $$
 b(t)-a(t)=\frac{\sin(2 \alpha(t))+2 \sin(\alpha(t))}{1+2 \sin^2(\alpha(t))}\dot{q}_2(t)-\frac{\sin(2 \alpha(t))+3 \sin(\alpha(t))}{1+2 \sin^2(\alpha(t))}\dot{q}_1(t).$$
 Consequently, the integral in  expressions (\ref{exponenciales}) is bounded with time $t$ since the integral of $\dot{q}_1(\tau)$ and $\dot{q}_2(\tau)$ in (\ref{q12d}) is obviously bounded and what we have are these oscillating functions
modulated in amplitude by other ones which also oscillate in a bounded way
with time, but in an erratic way with respect to the first ones since $\omega_1/\omega_2$ is irrational. This implies that, independently of the sign of $\lambda_{p,i,p}$ and $\lambda_{q,i,q}$, the determinant of the transition matrices associated to (\ref{eripp}) and (\ref{eriqq}) is bounded and far enough from zero when $t$ grows. Therefore, because of the geometric meaning of the determinant, whenever we take initial conditions for (\ref{eripp}) and (\ref{eriqq}) in a bounded set, the area of the set formed by the solutions of those systems at time $t$ does not grow with time neither goes to zero.

On the other hand, it is easily seen that $0$ and $\pm (b(t)-a(t))$ are respectively the eigenvalues of the matrices in (\ref{f_pg_q}). Then, for the system
$$
\dot{e}(t)= \lambda_{p,i,p} f_p(p(t),q(t)) e(t),$$
as $\mbox{ker}(f_p(p(t),q(t))-(b(t)-a(t))I)=\mbox{span}\{ (1 \, \, -1)^T \} $, which is independent of time, it happens that
$$
e^{ \lambda_{p,i,p} \int_{t_0}^t [b(\tau)-a(\tau)]d \tau} \left( \begin{array}{c} 1 \\ -1 \end{array} \right)$$
is a solution and, with the same argument as for the determinant, it is bounded with time. If we take another initial condition which is linearly independent with $(1 \,\, -1)^T$, it will form with the origin and $(1 \, \, -1)^T$ a paralelogram, and the paralelogram which corresponds to advance a time $t$ from the initial one will have a bounded (away from zero) area according to what has been argued for the determinant of the transition matrix. Because of this, the solution after time $t$ corresponding to this latter initial condition must also be bounded. This completes the proof of bounded behaviour of the transition matrix of this system. On the other hand, for the system
$$
\dot{e}(t)= \lambda_{q,i,q} g_q(p(t),q(t)) e(t),$$
as $\mbox{ker}(g_q(p(t),q(t)))=\mbox{span}\{(1 \, \, \,\,1)^T \}$, which is independent of time, it happens that
$$
\left( \begin{array}{c} 1 \\ 1 \end{array} \right) $$
is a solution, and it is in fact constant with time. With the same argument as before, if we take another initial condition which is linearly independent with this one, it will form with the origin and $(1 \, \, \,\,1)^T$ a paralelogram, and the paralelogram which corresponds to advance a time $t$ will have a bounded (away from zero) area according to what has been argued for the determinant of the transition matrix. Thus, the solution after time $t$ corresponding to this new initial condition must also be bounded. This implies that the second transition matrix is also bounded with time.

\subsection{Numerical experiments}

The results of the previous section, concerning the numerical approximation of the double pendulum problem with PLMMs, will be illustrated here. To this end, several LMMs will be used to study the time behaviour of the error in the Hamiltonian (\ref{ham_dd}), namely:

\begin{enumerate}
\item[1.]
The symmetric PLMM of second order (called PLMM2, cf. \cite{CH} )
\begin{eqnarray}
\rho_p(x)&=&(x-1)(x+1), \quad \sigma_p(x)=2x, \nonumber \\
\rho_q(x)&=&(x-1)(x^2+1), \quad \sigma_q(x)=x^2+x. \label{plmm2}
\end{eqnarray}
\item[2.]
The symmetric, nonpartitioned LMM
\begin{eqnarray}
\rho_p(x)=\rho_q(x)=(x-1)(x+1), \quad \sigma_p(x)=\sigma_q(x)=2x. \label{lmm2}
\end{eqnarray}
\item[3.]
The nonsymmetric, nonpartitioned third-order Adams method \cite{HNW}
\begin{eqnarray}
\rho_p(x)=\rho_q(x)=x^2(x-1), \quad \sigma_p(x)=\sigma_q(x)=\frac{23}{12}x^2-\frac{16}{12}x+\frac{5}{12}. \label{adams3}
\end{eqnarray}
\item[4.]
A nonsymmetric PLMM where one of the methods is the symmetric LMM in (\ref{lmm2}) and the other one is the second-order Adams method.
More specifically,
\begin{eqnarray}
\rho_p(x)&=&(x-1)(x+1), \quad \sigma_p(x)=2x, \nonumber \\
\rho_q(x)&=&x(x-1), \quad \sigma_q(x)=\frac{3}{2}x-\frac{1}{2}. \label{sim_nosim}
\end{eqnarray}
\end{enumerate}
For all numerical experiments, we have considered as initial conditions
$$p_1(0)=0, \quad p_2(0)=0, \quad q_1(0)=\frac{\pi}{12}, \quad q_2(0)=\frac{\pi}{6}.$$
Moreover, we have taken as starting procedure the \lq exact' starting values, i.e. those calculated with ode45 subroutine in Matlab with an error tolerance of $10^{-13}$.

We firstly  notice that in (\ref{plmm2}) $\rho_p$ and $\rho_q$ have no common roots except for $x_1=1$. In Figure \ref{fig1}, we represent the error in the Hamiltonian (\ref{hamdp}) against time. We can clearly observe that, as expected, the error in the Hamiltonian is bounded till very long times.
\begin{figure*}
\centerline{\includegraphics[width=100mm]{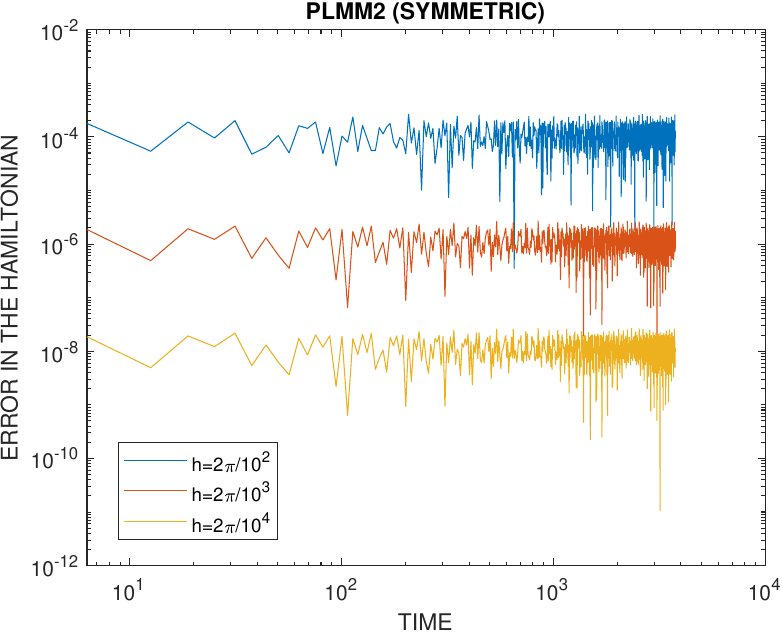}}
\caption{Error in the Hamiltonian against time measured at integer multiples of $2\pi$ when integrating the double pendulum problem with symmetric PLMM2 (\ref{plmm2}). }
\label{fig1}
\end{figure*}

However, when we consider the symmetric non-partitioned LMM (\ref{lmm2}),
an exponential error growth in the Hamiltonian with time turns up, as can be observed in Figure \ref{fig2}.
\begin{figure*}
\centerline{\includegraphics[width=100mm]{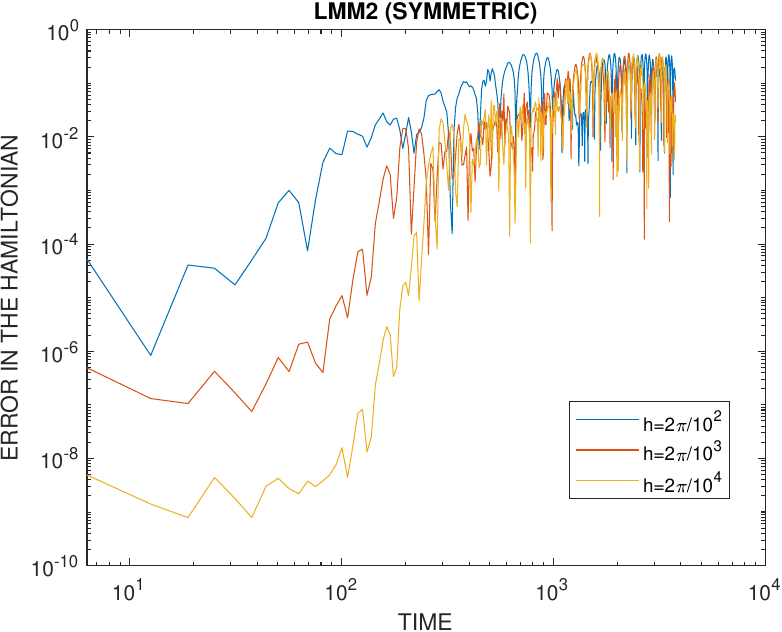}}
\caption{Error in the Hamiltonian against time measured at integer multiples of $2\pi$ when integrating the double pendulum problem with the symmetric non-partitioned LMM (\ref{lmm2}). }
\label{fig2}
\end{figure*}

On the other hand, if we take a non-symmetric non-partitioned LMM, as Adams method, the coefficients of the error in the Hamiltonian associated to the root $1$ on the even powers of $h$ will also be bounded with time. However, those associated to the odd powers of $h$ will grow linearly with time. Because of that, as the first characteristic polynomial of Adams method just has the root $x_1=1$ of unit of modulus, if the order is odd, linear error growth with time is expected, as can be seen in Figure \ref{fig3} for
(\ref{adams3}).
\begin{figure*}
\centerline{\includegraphics[width=100mm]{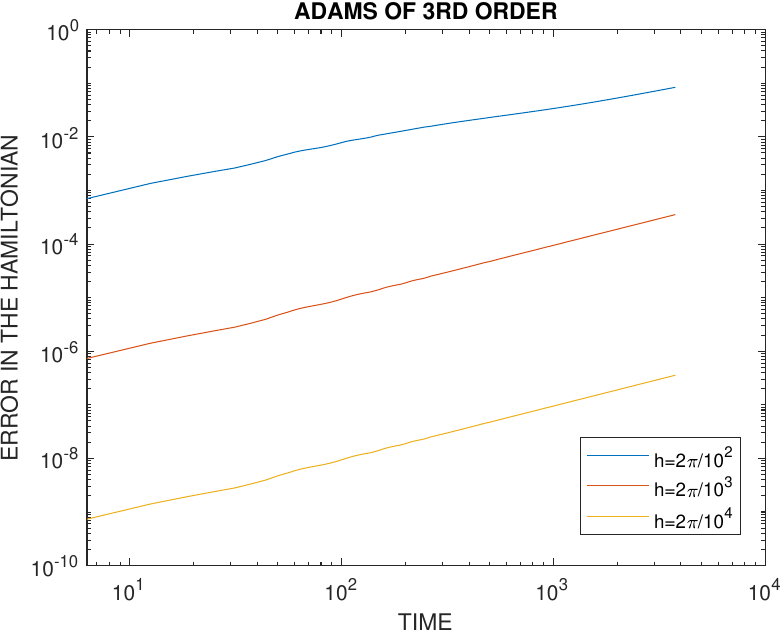}}
\caption{Error in the Hamiltonian against time measured at integer multiples of $2\pi$ when integrating the double pendulum problem with Adams method of 3rd order (\ref{adams3}). }
\label{fig3}
\end{figure*}

Finally, as for (\ref{sim_nosim}), notice that the first characteristic polynomials do not have common roots of unit modulus except for $x_1=1$, and the coefficient of the error associated to the root $x_{2,p}=-1$ of $\rho_p$ is known to be bounded with time, as stated before. As for the error in the Hamiltonian associated to the root $x_1=1$, the coefficient multiplying $h^2$ will be bounded, but the one multiplying $h^3$ will grow linearly. Because of that, when $h$ is small enough, the error seems to be bounded with time at the beginning but when $t$ grows the error associated to $h^3$ dominates and linear error growth is observed. That can be confirmed in Figure \ref{fig4}.
\begin{figure*}
\centerline{\includegraphics[width=100mm]{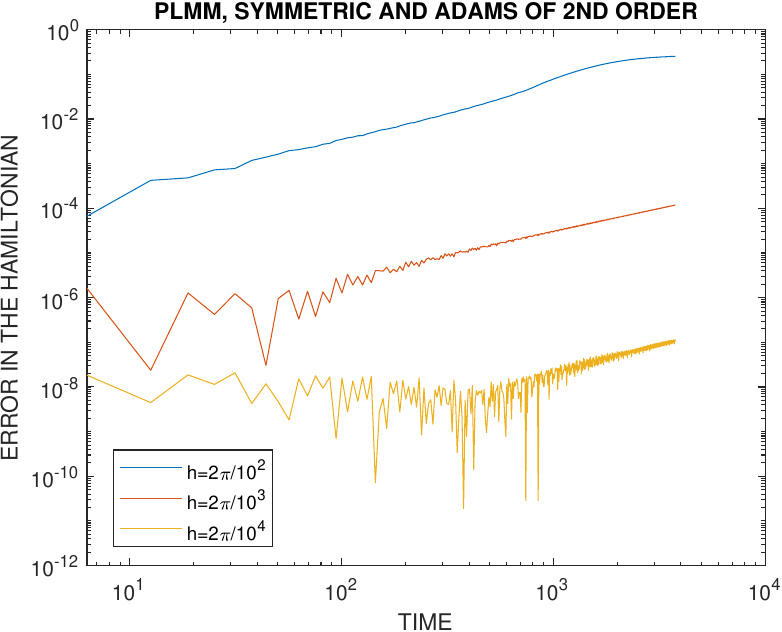}}
\caption{Error in the Hamiltonian against time measured at integer multiples of $2\pi$ when integrating the double pendulum problem with PLMM (\ref{sim_nosim}). }
\label{fig4}
\end{figure*}

As a conclusion, the error in the Hamiltonian with the symmetric PLMM behaves in a much more favourable manner.

\section{Concluding remarks}
The main contributions of this manuscript are thus the following:
\begin{list}{$\bullet$}{}
\item
An asymptotic expansion on the stepsize is given for the global error when integrating any $n$-dimensional ODE system ($n\ge 2$) with a PLMM. When some information on the exact solution to be approximated is known, this may allow to know how the coefficients in that expansion grow with time, as well as the behaviour with respect to the possible conservation of invariants.
According to this analysis, the following conclusions hold.
\item
It is not recommended in principle to consider PLMMs where the first characteristic polynomials of the method have common roots different from $x_1=1$, mainly if these roots have unit modulus, since they seem to lead to an exponential growth of error with time.
\item
Symmetric PLMMs with no common roots except $x_1=1$ in their first characteristic polynomials turn out very efficient  since the annihilation of the coefficients associated to the even powers of the stepsize in the local truncation error simplify the expressions for the coefficients associated to the global error and also those appearing for the error in the invariants. Moreover, although a non-smooth part turns up in the numerical solution in such a case, the analysis in the present paper provides assumptions under which that part is under control, even in the case of non-separable problems. Furthermore, the fact that these methods can be explicit enables them as a very valuable tool from the computational point of view compared with other geometric integrators for general non-separable problems with a certain structure.
\item
A detailed analysis has been performed for the  case study of small oscillations of the double pendulum, justifying in such a way the advantageous behaviour of symmetric PLMMs with respect to error growth in the Hamiltonian against other explicit LMMs, either symmetric and non-partitioned, non-symmetric and partitioned and non-symmetric and non-partitioned.
\end{list}

\bmhead{Acknowledgements}
This research has been supported by Ministerio de Ciencia e Innovaci\'on project PID2023-147073NB-I00.

\begin{appendices}

\section{Proof of Theorem \ref{th1}}\label{secA1}
For the sake of simplicity, we will assume that all roots of $\rho_p$ and $\rho_q$ are single and non-zero. The more general case is considered in \cite{R}.

What we must prove is which conditions the coefficients $e_{j,i,\alpha}$, $e_{j,i,\alpha \beta}$ must satisfy so that (\ref{ae}) holds. For that, we consider
\begin{eqnarray}
\varepsilon_n&=&p_n-p(t_n)-\sum_{j=r}^{2r-1} h^j \big[\sum_{i=1}^m x_i^n e_{j,i,p}(t_n) +\sum_{i=m+1}^{k_p} x_{i,p}^n e_{j,i,pp}(t_n)+ \sum_{i=m+1}^{k_q} x_{i,q}^n e_{j,i,pq}(t_n)\big], \nonumber \\
\eta_n&=&q_n-q(t_n)-\sum_{j=r}^{2r-1} h^j \big[\sum_{i=1}^m x_i^n e_{j,i,q}(t_n) +\sum_{i=m+1}^{k_p} x_{i,p}^n e_{j,i,qp}(t_n)+ \sum_{i=m+1}^{k_q} x_{i,q}^n e_{j,i,qq}(t_n)\big], \nonumber
\nonumber
\end{eqnarray}
and we want to find conditions under which can assure that $\varepsilon_n$ and $\eta_n$ are $O(h^{2r})$.

We notice that
\begin{eqnarray}
0&=&\rho_p(E)p_n-h \sigma_p(E)f(p_n,q_n) \nonumber \\
&=&\rho_p(E)p(t_n)-h \sigma_p(E)f(p(t_n),q(t_n)) \nonumber \\
&&+\sum_{j=r}^{2r-1} h^j \bigg[ \sum_{i=1}^m \rho_p(E) x_i^n e_{j,i,p}(t_n)+\sum_{i=m+1}^{k_p} \rho_p(E) x_{i,p}^n e_{j,i,pp}(t_n)
\nonumber \\
&&\hspace{1.5cm}
+\sum_{i=m+1}^{k_q}  \rho_p(E) x_{i,q}^n e_{j,i,pq}(t_n)
\nonumber \\
&&\hspace{1.5cm} -h \big[ \sum_{i=1}^m \sigma_p(E)  x_i^n f_p(p(t_n),q(t_n)) e_{j,i,p}(t_n)
\nonumber \\
&&\hspace{2cm}+\sum_{i=m+1}^{k_p} \sigma_p(E) x_{i,p}^n f_p(p(t_n),q(t_n)) e_{j,i,pp}(t_n)
\nonumber \\
&&\hspace{2cm}
+\sum_{i=m+1}^{k_q} \sigma_p(E) x_{i,q}^n f_p(p(t_n),q(t_n)) e_{j,i,pq}(t_n) \nonumber \\
&& \hspace{2cm}
+\sum_{i=1}^m \sigma_p(E)  x_i^n f_q(p(t_n),q(t_n)) e_{j,i,q}(t_n)
\nonumber \\
&& \hspace{2cm}
+\sum_{i=m+1}^{k_p} \sigma_p(E) x_{i,p}^n f_q(p(t_n),q(t_n)) e_{j,i,qp}(t_n)
\nonumber \\
&&\hspace{2cm}+ \sum_{i=m+1}^{k_q} \sigma_p(E) x_{i,q}^n f_q(p(t_n),q(t_n)) e_{j,i,qq}(t_n)
\big] \bigg] \nonumber \\
&& + \rho_p(E) \varepsilon_n-h \sigma_p(E)[f_p (p(t_n),q(t_n))\varepsilon_n+f_q (p(t_n),q(t_n))\eta_n] \nonumber \\
&&+O\big(h (\|\varepsilon_n\|^2+\|\eta_n\|^2)\big)+O(h^{2r+1}),
\label{for1}
\end{eqnarray}
and something similar for the second equation.
Then, by using (\ref{LTE}) and the notation in (\ref{bcad0b}), (\ref{bcad0c}), it follows that (\ref{for1}) is equivalent to
\begin{eqnarray}
0&=&\sum_{j=r}^{2r-1} h^j \bigg[ \rho_p(E)e_{j,1,p}(t_n)-h \sigma_p(E) [f_p(p(t_n),q(t_n))e_{j,1,p}(t_n) \nonumber \\
&& \hspace{5cm} +f_q(p(t_n),q(t_n))e_{j,1,q}(t_n)-c_{j,p}p^{(j+1)}(t_n)] \nonumber \\
&&\hspace{1cm}+\sum_{i=2}^m x_i^n \big[\rho_{p,i}(E) e_{j,i,p}(t_n)-h \sigma_{p,i}(E)[f_p(p(t_n),q(t_n))e_{j,i,p}(t_n) \nonumber \\
&& \hspace{7cm}+f_q(p(t_n),q(t_n))e_{j,i,q}(t_n)]\big] \nonumber \\
&&\hspace{1cm}+\sum_{i=m+1}^{k_p} x_{i,p}^n \big[\rho_{p,i,p}(E) e_{j,i,pp}(t_n)-h \sigma_{p,i,p}(E)[f_p(p(t_n),q(t_n))e_{j,i,pp}(t_n) \nonumber \\
&& \hspace{7cm}+f_q(p(t_n),q(t_n))e_{j,i,qp}(t_n)]\big] \nonumber \\
&&\hspace{1cm}+\sum_{i=m+1}^{k_q} x_{i,q}^n \big[\rho_{p,i,q}(E) e_{j,i,pq}(t_n)-h \sigma_{p,i,q}(E)[f_p(p(t_n),q(t_n))e_{j,i,pq}(t_n) \nonumber \\
&& \hspace{7cm}+f_q(p(t_n),q(t_n))e_{j,i,qq}(t_n)]\big]\bigg] \nonumber \\
&& + \rho_p(E) \varepsilon_n-h \sigma_p(E)[f_p (p(t_n),q(t_n))\varepsilon_n+f_q (p(t_n),q(t_n))\eta_n] \nonumber \\
&&+O(h^{2r+1}). \label{formulon}
\end{eqnarray}
We would like that
\begin{eqnarray}
\rho_p(E) \varepsilon_n-h \sigma_p(E)[f_p (p(t_n),q(t_n))\varepsilon_n+f_q (p(t_n),q(t_n))\eta_n]=O(h^{2r+1}), \nonumber \\
\rho_q(E) \eta_n-h \sigma_q(E)[g_p (p(t_n),q(t_n))\varepsilon_n+g_q (p(t_n),q(t_n))\eta_n]=O(h^{2r+1}), \label{epseta}
\end{eqnarray}
in order to deduce from here that $\varepsilon_n=O(h^{2r})$ and $\eta_n=O(h^{2r})$ after imposing that $\varepsilon_\nu=O(h^{2r})$ for $\nu=0,1,\dots,k_p-1$ and $\eta_\nu=O(h^{2r})$ for $\nu=0,1,\dots,k_q-1$. (For that deduction, we have to apply a result very similar to Lemma 5.6 in \cite{H}, and which proof can be seen in \cite{R}.)

We will focus on imposing that the first equation in (\ref{epseta}) is satisfied, because that is the one written in (\ref{formulon}), but an analogous argument is also valid to impose that the second equation is satisfied. In order that the terms associated to the root $1$ in (\ref{formulon}) are $O(h^{2r+1})$, it suffices to impose (\ref{ej1}), taking into account that the method $(\rho_p,\sigma_p)$ is  assumed to have order $r$. On the other hand, for the terms associated to $x_i^n$ ($i=2,\dots,m$), considering (\ref{LTEb}), it suffices to impose (\ref{eji}).

Let us now proceed to see how  the rest of terms should behave.
\begin{list}{$\bullet$}{}
\item
In order that the term in $h^{r}x_{i,q}^n$ in (\ref{formulon}) vanishes, as $\rho_p(x_{i,q})\neq 0$, $\rho_{p,i,q}(1)\neq 0$ and using also that, in an analogous way, $\rho_{q,i,p}(1)\neq 0$, (\ref{erpqqp}) should hold.
\item
Looking then at the term in $h^{r+1} x_{i,p}^n$ in the same sum, it should happen that
$$
\rho_{p,i,p}'(1)  \dot{e}_{r,i,pp}(t)-\sigma_{p,i,p}(1) f_p(p(t),q(t))e_{r,i,pp}(t)=0,$$
which is equivalent to (\ref{eripp}).

Looking then at $h^{r+1} x_{i,q}^n$
in (\ref{formulon}), it should happen that
$$
\rho_{p,i,q}(1)e_{r+1,i,pq}(t_n)-\sigma_{p,i,q}(1) f_q(p(t_n),q(t_n)) e_{r,i,qq}(t_n)=0,$$
which is equivalent to (\ref{er1pq}).
\item
Looking at the term in $h^{r+2} x_{i,p}^n$ in (\ref{formulon}), using abbreviated notation and (\ref{erpqqp}), it is required that
\begin{eqnarray}
\frac{1}{2} \lefteqn{[x_{i,p}^2\rho_p''(x_{i,p})+x_{i,p}\rho_p'(x_{i,p})]\ddot{e}_{r,i,pp}-x_{i,p}\sigma_p'(x_{i,p})\frac{d}{dt}[f_p(p,q)e_{r,i,pp}]}
\nonumber \\
&& +x_{i,p}\rho_p'(x_{i,p})\dot{e}_{r+1,i,pp}-\sigma_p(x_{i,p})[f_p(p,q)e_{r+1,i,pp}+f_q(p,q) e_{r+1,i,qp}]=0, \nonumber
\end{eqnarray}
which is equivalent to (\ref{er1pp}).

Looking now at the term in $h^{r+2} x_{i,q}^n$ in (\ref{formulon}), it follows that
\begin{eqnarray}
\lefteqn{\hspace{-4cm}\rho_p(x_{i,q})e_{r+2,i,pq}+x_{i,q}\rho_p'(x_{i,q})\dot{e}_{r+1,i,pq}-\sigma_p(x_{i,q})[f_p(p,q)e_{r+1,i,pq}+f_q(p,q)e_{r+1,i,qq}]}
\nonumber \\
&&-x_{i,q}\sigma_p'(x_{i,q})\frac{d}{dt}[f_q(p,q) e_{r,i,qq}]=0, \nonumber
\end{eqnarray}
which is equivalent, using (\ref{er1pq}), to (\ref{er2pq}).
\item
Looking now at the term in $h^{r+3} x_{i,p}^n$ in (\ref{formulon}), it follows that
\begin{eqnarray}
\lefteqn{\hspace{-1.4cm}\frac{1}{6}[x_{i,p}^3 \rho_p'''(x_{i,p})+3 \rho_p''(x_{i,p})+x_{i,p} \rho_p'(x_{i,p})]\stackrel{\dots}{e}_{r,i,pp}+\frac{1}{2}[x_{i,p}^2 \rho_p''(x_{i,p})+x_{i,p}\rho_p'(x_{i,p})]\ddot{e}_{r+1,i,pp}} \nonumber \\
&&+x_{i,p}\rho_p'(x_{i,p})\dot{e}_{r+2,i,pp}-\frac{1}{2}[x_{i,p}^2 \sigma_p''(x_{i,p})+x_{i,p}\sigma_p'(x_{i,p})] \frac{d^2}{d t^2}[f_p(p,q) e_{r,i,pp}]\nonumber \\ &&-\sigma_p'(x_{i,p})x_{i,p}\frac{d}{dt}[f_p(p,q) e_{r+1,i,pp}+f_q e_{r+1,i,qp}]
\nonumber \\
&&-\sigma_p(x_{i,p})[f_p(p,q) e_{r+2,i,pp}+f_q(p,q) e_{r+2,i,qp}]=0, \nonumber
\end{eqnarray}
which is equivalent to (\ref{er2ipp}) taking (\ref{eripp}) and (\ref{er1pp}) into account.
\end{list}

Let us assume now that the starting values in (\ref{bcad0a}) are such that
\begin{eqnarray}
p_\nu&=&p(t_\nu)+\sum_{j=r}^{2r-1} s_{\nu,p}^{(j)} h^j+O(h^{2r}), \quad \nu=0,1,\dots,k_p-1,\nonumber \\
q_\nu&=&q(t_\nu)+\sum_{j=r}^{2r-1} s_{\nu,q}^{(j)} h^j+O(h^{2r}), \quad \nu=0,1,\dots,k_q-1.\nonumber
\end{eqnarray}
Then, using that $e_{r,i,pq}=e_{r,i,qp}=0$, for $\epsilon_\nu$ and $\eta_\nu$ to be $O(h^{r+1})$ for the above values of $\nu$, it should happen that
\begin{eqnarray}
s_{\nu,p}^{(r)}&=&\sum_{i=1}^m x_i^{\nu} e_{r,i,p}(t_0)+\sum_{i=m+1}^{k_p} x_{i,p}^\nu e_{r,i,pp}(t_0), \quad \nu=0,1,\dots,k_p-1,\nonumber \\
s_{\nu,q}^{(r)}&=&\sum_{i=1}^m x_i^{\nu} e_{r,i,q}(t_0)+\sum_{i=m+1}^{k_q} x_{i,q}^\nu e_{r,i,qq}(t_0), \quad \nu=0,1,\dots,k_q-1.\label{svalues}
\end{eqnarray}
These are two Vandermonde systems which completely determine the values $e_{r,i,p}(t_0),  e_{r,i,q}(t_0)$ for $i=1,\dots,m$ and
$e_{r,i,pp}(t_0)$ for $i=m+1,\dots,k_p$ and $e_{r,i,qq}(t_0)$ for $i=m+1,\dots,k_q$.
On the other hand, for $\epsilon_\nu$ and $\eta_\nu$ to be $O(h^{r+2})$, there should hold that
\begin{eqnarray}
s_{\nu,p}^{(r+1)}&=&\sum_{i=1}^m x_i^\nu \nu \dot{e}_{r,i,p}(t_0)+\sum_{i=1}^m x_i^\nu e_{r+1,i,p}(t_0)
\nonumber \\
&&+\sum_{i=m+1}^{k_p} x_{i,p}^\nu [ e_{r+1,i,pp}(t_0)+\nu \dot{e}_{r,i,pp}(t_0)]+\sum_{i=m+1}^{k_q} x_{i,q}^\nu e_{r+1,i,pq}(t_0),\nonumber \\
&& \hspace{6cm} \nu=0,1,\dots,k_p-1, \nonumber \\
s_{\nu,q}^{(r+1)}&=&\sum_{i=1}^m x_i^\nu \nu \dot{e}_{r,i,q}(t_0)+\sum_{i=1}^m x_i^\nu e_{r+1,i,q}(t_0)
\nonumber \\
&&+\sum_{i=m+1}^{k_q} x_{i,q}^\nu [e_{r+1,i,qq}(t_0)+\nu \dot{e}_{r,i,qq}(t_0)]+\sum_{i=m+1}^{k_p} x_{i,p}^\nu e_{r+1,i,qp}(t_0), \nonumber \\
&& \hspace{6cm} \nu=0,1,\dots,k_q-1.\label{svalues2}
\end{eqnarray}
We notice that these are again two Vandermonde systems. The first one in the unknowns $e_{r+1,i,p}(t_0)$ $(i=1\dots,m)$,  $e_{r+1,i,pp}(t_0)$ $i=m+1, \dots, m+k_p$  and the second one in $e_{r+1,i,q}(t_0)$ $(i=1,\dots,m)$, $e_{r+1,i,qq}(t_0) (i=m+1,\dots,m+k_q)$. (We notice that the rest of terms can be calculated from values which are already determined through (\ref{ej1}), (\ref{eji}) for $j=r$ and (\ref{er1pq}) and (\ref{er1qp}) for $j=r+1$.)

Proceeding inductively, the initial conditions for the differential systems associated to the error coefficients corresponding to higher powers of $h$ can be determined.

Moreover, when the starting procedure is of order $r+1$, it is clear that
$$s_{\nu,p}^{(r)}=0, \quad \nu=0,1,\dots,k_p-1, \quad s_{\nu,q}^{(r)}=0, \quad \nu=0,1,\dots,k_q-1.$$
Thus, the systems in (\ref{svalues}) are homogeneous and then $e_{r,i,pp}(t_0)=e_{r,i,qq}(t_0)=0$. However, the fact that $$s_{\nu,p}^{(r+1)}=s_{\nu,q}^{(r+1)}=0$$
does not make (\ref{svalues2}) homogeneous because $\dot{e}_{r,i,p}(t_0)$ and $\dot{e}_{r,i,q}(t_0)$ do not vanish in general.

Finally, in the asymptotic expansion, the terms associated to the $n$th-powers of $x_i$, $x_{i,p}$ or $x_{i,q}$ of modulus $<1$ can be dropped off since, when $n>0$, those $n$th-powers are $O(h^{2r})$ for fixed $t_n=t_0+nh$.

\end{appendices}

\end{document}